\newcommand{\nn}{\,\hbox{\vrule width 1pt height 8ptdepth 2pt}\,}
\newcommand\n[1]{\nn #1 \nn}
\numberwithin{equation}{section}
\newcommand{\eps}{\varepsilon}
\newcommand{\rH}{\mathrm{ H}}
\newtheorem{theorem}{Theorem}[section]
\newtheorem{lemma}[theorem]{Lemma}
\newtheorem{proposition}[theorem]{Proposition}
\newtheorem{corollary}[theorem]{Corollary}
\newtheorem{definition}[theorem]{Definition}
\newtheorem{remark}[theorem]{Remark}
\newcommand{\sol}{\mathrm{sol}}
\newcommand\divv{{\rm div}\,}
\begin{document}
\title[]{Stationary solutions  for stochastic damped Navier-Stokes equations in $\mathbb R^d$}
\author[Z. Brze\'zniak and B. Ferrario]{Zdzis{\l}aw Brze\'zniak \\Department of Mathematics\\
                         The University of York\\
                         Heslington, York YO10 5DD, UK
       \and  Benedetta Ferrario \\Dip. di Matematica ``F. Casorati''\\
                           Universit\`a di Pavia\\
                           I-27100 Pavia, Italy}
\date{\today}
\begin{abstract}
We consider the stochastic damped Navier-Stokes equations in $\mathbb
R^d$ ($d=2,3$), assuming  as in our previous work \cite{noi} that the
covariance of the noise  is not too regular, so It\^o calculus cannot
 be applied in the space of finite energy vector fields.
We prove the existence of an invariant measure when $d=2$
and of a stationary solution when $d=3$.
\end{abstract}

\subjclass[2010]{76M35, 76D06, 60H15}

\keywords{invariant measures, stationary solutions,
$\gamma$-radonifying operators, unbounded domains}
\maketitle

\section{Introduction}
We consider the  stochastic damped Navier-Stokes equations,
that is the equations of motion of a viscous
incompressible fluid with two forcing terms, one is random  and the
other one is deterministic. These equations are
\begin{equation}\label{sist:ini}
\begin{cases}
\partial_t v+[-\nu \Delta v +\gamma v+(v \cdot \nabla)v +\nabla p]\ dt
= G(v)\ \partial_t w+f\ dt
\\
\divv  v=0
\end{cases}
\end{equation}
where the unknowns are the vector velocity $v=v(t,\xi)$
and  the scalar pressure $p=p(t,\xi)$ for $t\ge 0$ and  $\xi \in \mathbb R^d$.
By $\nu>0$ we denote  the kinematic viscosity  and by $\gamma> 0$  the
 sticky  viscosity, see \cite{Gal}, \cite{CR}, \cite{BF14}. When
 $\gamma=0$ they reduce to the stochastic Navier-Stokes equations.
On the right hand side $ \partial_t w$ is a space-time white
noise and $f$ is a deterministic forcing term; we consider a
multiplicative term
$G(v)$ keeping track of the fact that the noise may depend on the
velocity.  The low regularity of this term $G(v)$ is the peculiarity of our problem.

The current paper is a natural continuation of our recent  work \cite{noi} in which we dealt with  SNSEs \eqref{sist:ini} when $\gamma=0$. A common feature of both these papers is that the diffusion coefficient  $G(v)$ is 
 not regular enough to allow to use the It\^o formula
in the space  of finite energy velocity vector fields. In \cite{noi} 
 we proved the
existence of martingale solutions, for $d=2$ or $d=3$ and, for $d=2$, the pathwise
uniqueness of solutions.

Our  aim here is to investigate the existence of  invariant measures in $\mathbb R^2$
and stationary solutions  in $\mathbb R^3$ for these stochastic
damped Navier-Stokes equations \eqref{sist:ini}.
 On one side we follow the work of \cite{FG}, where the existence of
 stationary martingale solutions has been proved in two dimensional
 bounded domains when the noise term is of low regularity. 
 On the other side we follow the method used by the first
named authour with Motyl and Ondrej\'at \cite{BMO_2015} for two
dimensional  unbounded  domains  but
 with  more regular noise. The proof of the existence of an invariant
 measure given in the latter paper
 is based on the technique working with weak topologies  introduced by  Maslowski and Seidler \cite{Maslowski+Seidler_1999}.  
 This technique has been successful also
in the study of invariant measures for stochastic nonlinear beam and
wave equations considered in \cite{BOS}.

We would like to  point out that similarly to our previous paper \cite{noi} by  using $\gamma$-radonifying (instead of Hilbert-Schimdt) operators and It\^o integral in suitable  Banach spaces we were able to  weaken the   assumptions from \cite{FG}, even in the bounded domain cases,   on the covariance of the noise,  for the existence of  an invariant measure and a stationary  solution.

Let us recall that the existence of an invariant measure for
2d stochastic Navier-Stokes equations driven by an  additive noise
in  unbounded domains  satisfying the Poincar\'e condition has been
proved (as a byproduct of  the existence of an invariant compact
random set) by the first named authour and Li in
\cite{Brzezniak+Li_2006},
see also  \cite{bclllr}.  The same proof would work,
when the noise is additive, for the
damped  stochastic Navier-Stokes equations in $\mathbb{R}^2$
considered in the present article
and  for stochastic reaction diffusion
equations. Finally, we point out that recently Wang
in \cite{w_2002} considered the 2d   stochastic Navier-Stokes
equations  in unbounded Poincar\'e domains and with
a linear multiplicative noise in Stratonovich form (for which the Doss-Sussman
transformation is applicable).
So with respect to these papers, our work concerns more general noise
terms.

As far as the contents of the paper are concerned, in Section 2 we
introduce the spaces and operators appearing in the abstract
formulation of system \eqref{sist:ini}, the assumptions on the
noise term, some  compactness results and definitions.
In Section 3 we work in $\mathbb R^2$ and obtain the existence
of an invariant measure. In Section 4  we work in $\mathbb R^3$ and obtain
the existence of a stationary solution, as limit of a sequence of
stationary martingale solutions solving a  smoothed equation.

\section{Mathematical framework}\label{sec-mf}
We  recall the basic notations and results. For more details we refer
to  the monograph \cite{temam} by Temam and our paper \cite{noi}.

\subsection{Spaces and operators}
For $1 \le p<\infty$ let $L^p=[L^p(\mathbb R^d)]^d$ with norm
\[
\|v\|_{L^p}=\left(\sum_{k=1}^d \|v^k\|_{L^p(\mathbb R^d)}^p\right)^{\frac1p}
\]
where $v=(v^1,\ldots,v^d)$.  For $p=\infty$, we set
$\|v\|_{L^\infty}=\sum_{k=1}^d \|v^k\|_{L^\infty(\mathbb R^d)}$. \\
Set $J^s=(I-\Delta)^{\frac s2}$.
We define the generalized Sobolev spaces of divergence free vector
distributions as
\begin{eqnarray}
H^{s,p}&=&\{u \in[{\mathcal S}^\prime(\mathbb R^d)]^d:
\|J^s u\|_{L^p}<\infty\},
\\
H^{s,p}_\sol&=&\{u \in H^{s,p} : \divv  u =0 \}
\end{eqnarray}
for $s \in \mathbb R$ and $1\le p \le \infty$.
$J^\sigma$ is an isomorphism between $H^{s,p}$ and $H^{s-\sigma,p}$
for $s \in \mathbb R$ and $1< p < \infty$.
Moreover $H^{s_2,p} \subset H^{s_1,p}$ when $s_1<s_2$.
In particular, for the Hilbert case $p=2$
we set $\rH=H^{0,2}_\sol$ and, for $s\neq 0$, $\rH^s=H^{s,2}_\sol$;
that is
\[
 \rH=\{v \in[L^2(\mathbb R^d)]^d: \divv  v =0\}
\]
with scalar product inherited from $[L^2(\mathbb R^d)]^d$. Moreover,
$H_{\mathrm{loc}}$ is the  space $H$ with the topology
generated by the family of
semi-norms $\|v\|_{H_N}=\left(\int_{|\xi|<N}|v(\xi)|^2d\xi\right)^{1/2}$,
$N\in \mathbb N$, and $L^2(0,T;H_{\mathrm{loc}})$ is the space $L^2(0,T;H)$ with the topology
generated by the family of semi-norms $\|v\|_{L^2(0,T;H_N)}$,
$N\in \mathbb N$.
By $H_{\mathrm{w}}$  we denote the space $H$  with the weak topology and by
$C([0,T];H_{\mathrm{w}})$ the space of $H$-valued  weakly continuous
functions with the topology of uniform weak convergence on $[0,T]$;
in particular $v_n \to v$ in $C([0,T];H_{\mathrm{w}})$ means
\[
\lim_{n\to \infty} \sup_{0\le t\le T}|(v_n(t)-v(t),h)_H|=0
\]
for all $h \in H$.
Notice that $v(t) \in H$ for any $t$ if $v \in C([0,T];H_{\mathrm{w}})$.

From \cite{HW} one knows that there exists a  separable Hilbert space $U$
such that $U$ is a dense subset
of $H^1$ and is compactly embedded in $H^1$.
We also have that
\[
U \subset H^1 \subset H \simeq H^\prime\subset H^{-1}\subset U^\prime
\]
with dense and continuous embeddings, but in addition $H^{-1}$
is compactly embeddded in $U^\prime$.

Now we define the operators appearing in the abstract formulation.
Let $A=-\Delta$; then
$A$ is a
linear unbounded  operator in $H^{s,p}$ as well as in
$H^{s,p}_\sol$
($s \in \mathbb R$, $1\le p<\infty$), which generates
a contractive and analytic $C_0$-semigroup $\{e^{-tA}\}_{t\ge 0}$.
Moreover, for $t>0$ the operator $e^{-tA}$ is bounded
from $H^{s,p}_\sol$ into $H^{s^\prime,p}_\sol$
with $s^\prime>s$ and there exists a constant $M$ (depending on $s^\prime-s$ and $p$)
such that
\begin{equation}\label{semigruppo}
\|e^{-tA}  \|_{\mathcal L(H^{s,p}_\sol;H^{s^\prime,p}_\sol)}\le M (1+t^{-(s^\prime-s)/2}) .
\end{equation}
We have $A: H^1 \to  H^{-1}$ as a linear bounded operator and
\[
\langle Av,v\rangle=\|\nabla v\|_{L^2}^2, \;\;\; v\in H^1,
\]
where
\[
\|\nabla v \|_{L^2}^2=\sum_{k=1}^d \|\nabla v^k\|^2_{L^2}, \;\;\; v\in H^1
\]
and, in general, $\langle\cdot, \cdot \rangle$ denotes the
$(H^{s,p})^\prime-H^{s,p}$ duality bracket.

Moreover
\begin{equation}\label{quadrati}
\|v\|_{H^1}^2=\|v\|^2_{L^2}+\|\nabla v\|_{L^2}^2 .
\end{equation}

We define the bilinear operator $B:H^1\times H^1\to H^{-1}$ via the
trilinear form
\[
\langle B(u,v),z\rangle=\int_{\mathbb R^d} (u(\xi)\cdot \nabla )v(\xi) \ \cdot z(\xi) \ d\xi.
\]
This operator $B$  is bounded and
\begin{equation}\label{scambio}
\langle B(u,v),z\rangle =-\langle B(u,z),v\rangle , \qquad
\langle B(u,v),v\rangle =0
\end{equation}
for any $u,v,z \in H^1$. Moreover,
$B$ can be extended to a bounded bilinear operator from
$H^{0,4}_\sol\times H^{0,4}_\sol$ to $H^{-1}$ with
\begin{equation}\label{bL4}
\|B(u,v)\|_{H^{-1}}\le \|u\|_{L^4}\|v\|_{L^4}
\end{equation}
and, for any $a>\frac d2+1$, $B$ can be extended to a bounded bilinear operator from
$H\times H$ to $H^{-a}$ with
\begin{equation}\label{stimaB-a}
\|B(u,v)\|_{H^{-a}}\le C \|u\|_{L^2}\|v\|_{L^2}.
\end{equation}
Once and for all we denote by $C$ a generic constant, which may vary
from line to line; we number it if
we need to identify it.

Finally, we define the noise forcing term.
Given a real separable Hilbert space $Y$
we consider a $Y$-cylindrical Wiener process $\{w(t)\}_{t \ge 0}$ defined on a
stochastic basis $(\Omega,\mathbb F, \{\mathbb F_t\}_{t\ge 0},\mathbb P)$
where the filtration  is  right continuous.

For the covariance of the noise we make the following assumptions:
there exists $g \in (0,1)$ such that
\begin{enumerate}
\item[{\bf (G1)}]
the mapping
$G:H\to \gamma(Y;H^{- g })$
          is well defined and
\[
\sup_{v \in H} \|G(v)\|_{\gamma(Y;H^{-g})}=:K_{g,2}<\infty
\]
\item[{\bf (G2)}]
 the mapping  $G:H\to \gamma(Y;H^{- g ,4}_\sol)$
            is  well defined  and
\[
\sup_{v \in H} \|G(v)\|_{\gamma(Y;H^{-g,4}_\sol)}=:K_{g,4}<\infty
\]
\item[{\bf (G3)}] if assumption {\bf (G1)} holds, then for any
$\phi \in H^{-g}$ the mapping $H\ni v\mapsto G(v)^* \phi \in Y$ is continuous
when in  $H$ we consider the Fr\'echet topology inherited from
the space $H_{\mathrm{loc}}$ or the weak topology of $H$
\item[{\bf (G4)}] if assumption {\bf (G1)} holds,
then $G$ extends to a Lipschitz continuous map
$G:H^{-g}\to \gamma(Y;H^{-g})$, i.e.
\[
\exists\ L_g>0: \; \|G(v_1)-G(v_2)\|_{\gamma(Y;H^{-g})}
    \le L_g\|v_1-v_2\|_{H^{-g}}
\]
for any $v_1, v_2 \in H^{-g}$
\end{enumerate}
We remind that the case $g=0$ in {\bf (G1)} would give a
Hilbert-Schmidt operator in $H$, which is the case considered in
\cite{BM2013,BMO_2015}.

\subsection{Abstract formulation}
Projecting equation \eqref{sist:ini} onto the space of divergence
free vector fields, we get
the  abstract form of the stochastic damped
Navier-Stokes equations \eqref{sist:ini}
\begin{equation}\label{sns}
dv(t)+[Av(t)+\gamma v(t)+B\left(v(t),v(t)\right)]\,dt
=
G(v(t))\,dw(t)+f(t)\ dt
\end{equation}
with initial velocity $v(0):\Omega\to H$ having the  law $\mu$.
Here $\gamma>0$ is fixed  and for simplicity we have put $\nu=1$.
The case $\gamma=0$ was considered in our previous paper \cite{noi}. Now we add
the damping term in order to investigate the  existence of a stationary solution; this is
not necessary in Poincar\'e (in particular  bounded) domains.

We define a martingale solution on a finite time interval, assuming
{\bf (G1)} and $f \in L^1(0,T;H^{-1})$, $\mu(H)=1$.
\begin{definition}
[martingale solution]
We say that there exists a martingale solution of equation \eqref{sns}
on the time interval $[0,T]$ with initial velocity of law $\mu$ if there exist
\begin{enumerate}
\item[$\bullet$]
a stochastic basis $(\hat\Omega,\hat{\mathbb F},
\{\hat{\mathbb F}_t\}_{t\in [0,T]},\hat{\mathbb P})$
\item[$\bullet$]
a $Y$-cylindrical Wiener process $\hat w$
\item[$\bullet$]
a progressively measurable process $\hat v:[0,T]\times \hat \Omega\to H$ with
$\hat{\mathbb P}$-a.e. path
\[
\hat v \in C([0,T];H_{\mathrm{w}})\cap L^2(0,T;L^4)
\]
and $\hat v(0)$ has law $\mu$; moreover for any $t \in [0,T], \psi \in H^2$, $\hat{\mathbb P}$-a.s.,
\begin{eqnarray}\nonumber \label{sol-path}
 ( \hat v(t),\psi)_H
 &+&\int_{0}^t \langle A\hat v(s) ,\psi\rangle ds
 +\gamma \int_{0}^t ( \hat v(s) ,\psi)_H ds
\\\nonumber
 &+&\int_{0}^t \langle B(\hat v(s),\hat v(s)),\psi \rangle ds
\\
&=&
 ( \hat v(0),\psi)_H
  +\int_{0}^t \langle f(s) ,\psi\rangle ds
  +\langle \int_{0}^t G(\hat v(s))\,d\hat w(s),\psi\rangle.
\end{eqnarray}
\end{enumerate}
\end{definition}
\noindent
All the terms in \eqref{sol-path} make sense; in particular
the trilinear term is well defined thanks to \eqref{bL4} which
 provides the following estimate
\[
\Big|\int_{0}^t \langle B(\hat v(s),\hat v(s)),\psi \rangle ds\Big|
\le
\|\psi\|_{H^1}\int_0^t\|\hat v(s)\|_{L^4}^2ds.
\]
However, in the following sections we shall prove the existence of a martingale
solution with a time integrability higher than
$\hat v \in L^2(0,T;L^4)$.
As far as the stochastic integral is concerned, by assuming {\bf (G1)}
we obtain that the random variable
$\int_{0}^t G(\hat v(s))\,d\hat w(s)$ belongs to
$H^{-g}$ ($\hat{\mathbb P}$-a.s.), see
Proposition 5.2 in \cite{noi}.
\\
Initial deterministic velocity   $x \in H$ corresponds to $\mu=\delta_x$.

As far as stationary martingale solutions are concerned, we give the definition
involving the time interval $\mathbb R_+=[0,\infty)$.
Again, we assume {\bf (G1)} and $f \in L^1_{\mathrm{loc}}(\mathbb R_+;H^{-1})$, $\mu(H)=1$.
\begin{definition}
[stationary martingale solution]
We say that there exists a stationary martingale solution of  equation  \eqref{sns}
on the time interval $\mathbb R_+$
with initial velocity of law $\mu$ if there exist
\begin{enumerate}
\item[$\bullet$]
a stochastic basis $(\hat\Omega,\hat{\mathbb F},\{\hat{\mathbb F}_t\}_{t\ge 0},\hat{\mathbb P})$
\item[$\bullet$]
a $Y$-cylindrical Wiener process $\hat w$
\item[$\bullet$]
a stationary and progressively measurable process $\hat v:\mathbb R_+\times \hat \Omega\to H$ with
$\hat{\mathbb P}$-a.e. path
\[
\hat v \in C(\mathbb R_+;H_{\mathrm{w}})\cap L^2_{\mathrm{loc}}(\mathbb R_+;L^4)
\]
and $\hat v(0)$ has law $\mu$; moreover, for any $t >0$ and
$\psi \in H^2$ equation \eqref{sol-path} holds $\hat{\mathbb P}$-a.s.
\end{enumerate}
\end{definition}

The first result is about the existence of  a martingale
solution. Moreover for $d=2$ there is pathwise uniqueness.
Now, our aim is to prove the existence of a martingale stationary solution  for
$d=2,3$; actually, for $d=2$ we prove something more, that is the existence of
an  invariant measure.

\subsection{Compactness results}
In this section we fix  $p \in (1,\infty)$ and $\beta,\delta \in
(0,\infty)$. For any $T>0$ let us define
the space
\begin{equation}\label{eqn-Z_T}
Z_T=C([0,T];H_{\mathrm{w}})\cap L^2(0,T;H_{\mathrm{loc}})\cap
L^{p}_{\mathrm{w}}(0,T;L^4)\cap C([0,T];U^\prime)
\end{equation}
which is a locally convex topological space with the topology
${\mathcal T}_T$ given by the supremum of the corresponding topologies. Let us also define a function
$\n{\cdot}:Z_T \to [0,\infty]$ by
\begin{equation}\label{eqn-n_T}
\n{ v}_T=\|v\|_{L^\infty(0,T;H)}+\|v\|_{L^2(0,T;H^\delta)}+
 \|v\|_{L^p(0,T;L^4)}+\|v\|_{C^\beta([0,T];H^{-1})},
\end{equation}
if the RHS above is finite, and, otherwise,  $\n{ v}_T=\infty$.
Notice that the function $\n {\cdot}_T$ is  not the  natural norm of $Z_T$. But it is used  in the following compactness result.
\begin{lemma}\label{lemmaZT}
For any $a>0$ the set
\[
K_T(a)=\{v \in Z_T: \n{ v}_T \le a\}
\]
is a metrizable compact subset of $Z_T$.
\end{lemma}
\begin{proof}
Since $K_T(a)$ is a bounded subset of $Z_T$, it is metrizable for each of the
four topologies involved in the definition of ${\mathcal T}_T$.
The compactness result comes from   Lemma 5.4 in \cite{noi}.
\end{proof}

Let us point out that we considered $L^2(0,T;H_{\mathrm{loc}})$  to be  the space
$L^2(0,T;H)$  with the topology
generated by the semi-norms $\|v\|_{L^2(0,T;H_N)}$,
$N\in \mathbb N$, since the space $K_T(a)$ involves a boundedness in
the $L^2(0,T;H^\delta)$-norm; we know that
any bounded sequence in
$\|v\|_{L^2(0,T;H^\delta)}$ has a subsequence
weakly converging and its limit belongs to $L^2(0,T;H^\delta)$
which is contained in $L^2(0,T;H)$.

Working in the unbounded time interval $\mathbb R_+$, we have a similar compactness result.
We consider the following locally convex topological spaces:
\begin{itemize}
\item
$C(\mathbb R_+;H_{\mathrm{w}})$
 with the topology generated by the family of semi-norms\\
 $\|v\|_{N,h}=\displaystyle\sup_{0\le t\le N}|\langle v(t), h\rangle |$, $N \in
 \mathbb N, h \in H$;
\item
$L^2_{\mathrm{loc}}(\mathbb R_+;H_{\mathrm{loc}})$ is the space
$L^2_{\mathrm{loc}}(\mathbb R_+;H)$ with metric \\
$d(u,v)=\displaystyle\sum_{N=1}^\infty
     \frac 1{2^N}\frac{\|u-v\|_{L^2(0,N;H_N)}}{1+\|u-v\|_{L^2(0,N;H_N)}}$;
\item
$L^p_{\mathrm{loc},\mathrm{w}}(\mathbb R_+;L^4)$ is the space $L^p_{\mathrm{loc}}(\mathbb R_+;L^4)$
 with the topology generated by the family of semi-norms, with $\frac 1p+\frac 1{p^\prime}=1$,
 \[\|v\|_{N,h}=\displaystyle \vert \int_0^N \int_{\mathbb R^3} v(t,\xi)h(t,\xi) dt d\xi\vert , \; N\in
 \mathbb N, h \in L^{p^\prime}(0,N;L^{\frac 43});\]
\item
$C(\mathbb R_+;U^\prime)$  with metric
$d(u,v)=\displaystyle\sum_{N=1}^\infty
     \frac
     1{2^N}\frac{\|u-v\|_{C([0,N];U^\prime)}}{1+\|u-v\|_{C([0,N];U^\prime)}}$.
\end{itemize}
We define the  space
\[
Z=C(\mathbb R_+;H_{\mathrm{w}})\cap L^2_{\mathrm{loc}}(\mathbb R_+;H_{\mathrm{loc}})
\cap
L^p_{\mathrm{loc},\mathrm{w}}(\mathbb R_+;L^4)
\cap C(\mathbb R_+;U^\prime).
\]
It is a locally convex topological space with the topology $\mathcal T$
given by  the supremum of the corresponding topologies.

We have this compactness result.
\begin{lemma}\label{lemma-Zcomp}
For any sequence $\alpha=\{\alpha_N\}_{N \in \mathbb N}$
of positive numbers,  the set
\[
K(\alpha)=\{v \in Z:\n{ v}_N \le \alpha_N \text{
  for any } N \in \mathbb N\}
\]
is a metrizable compact subset of $Z$.
\end{lemma}
\begin{proof}
It is enough to know
that each $K_N(\alpha_N)$ is a metrizable compact subset of $Z_N$, see, e.g., a similar case in Corollary B.2 of \cite{On2010}. But this is Lemma
\ref{lemmaZT}.
\end{proof}

\subsection{Invariant measures}
Here we suppose that given the initial velocity $x\in H$ there exists a unique solution to
equation \eqref{sns}. We denote  by  $v(t;x)$  this solution at time $t>0$ and
assume that the deterministic forcing term $f$ is independent of time.
Therefore we
define the  family $\{P_t\}_{t\ge 0}$,
\begin{equation}\label{eqn-Markov}
(P_t\phi)(x)= {\mathbb E}[\phi( v(t;x))]
\end{equation}
for any $\phi\in B_b(H)$, i.e. $\phi:H\to \mathbb R$ is a  bounded and Borel measurable  function.

It is obvious that $P_t\phi$ is bounded  for every $\phi\in
B_b(H)$. Moreover,  it is also measurable when the weak existence and
uniqueness in law hold for equation \eqref{sns}, see Corollary 23 in
\cite{On2005} (which  generalizes to the infinite dimensional
setting  the finite dimensional result of Stroock and  Varadhan  \cite{Stroock+Varadhan_2006}).
Let us point out that if this unique solution to equation \eqref{sns}
has a.e. path in $C([0,T];H)$, then it is also a Markov process, see
Theorem 27 in \cite{On2005}.

First  we introduce four classes of  functions $\phi:H\to \mathbb R$
which are continuous with respect to different topologies:

\begin{itemize}
\item[(i)] A function $\phi \in C(H)$ iff  $\phi$ is continuous w.r.t. to strong
topology on $H$;

\item[(ii)] a function $\phi \in C(H_{\mathrm{w}})$ iff  $\phi$ is continuous w.r.t. to weak
topology  on $H$;

\item[(iii)] a function $\phi \in SC(H_{\mathrm{w}})$ iff  $\phi$ is sequentially continuous w.r.t. to weak
topology  on $H$

\item[(iv)] a function $\phi \in C(H_{\mathrm{bw}})$  iff $\phi$ is continuous
w.r.t. to bounded weak
topology  on $H$, which is  the finest topology on $H$,   whose family of closed sets  agrees with the family of closed sets from the weak topology on  closed balls in $H$.
\end{itemize}
It is known,  see e.g. \cite{ms2001}, that
\begin{equation}\label{eqn-C spaces}
SC(H_{\mathrm{w}})=C(H_{\mathrm{bw}}).
\end{equation}

Note also that   the following inclusions hold
\[
  C(H_{\mathrm{w}}) \subset C(H_{\mathrm{bw}}) \subset    C(H) .
\]

When we add the subscript $b$ we mean that the function is also bounded, for instance
$C_b(H_{\mathrm{bw}})=\{ \phi \in C(H_{\mathrm{bw}}): \phi \mbox{ is bounded}\}$.

Thus, we get the following inclusions
\begin{equation}\label{eqn-C spaces2}
 C_b(H_{\mathrm{w}}) \subset C_b(H_{\mathrm{bw}}) \subset C_b(H) .
\end{equation}
Let us also notice that because $H$ is a separable space,
the weak Borel and the (strong) Borel $\sigma$
fields on $H$ are equal, i.e.
$\mathcal{B}(H)=\mathcal{B}(H_{\mathrm{w}})$, see  Theorem 7.19 in
 \cite{Zizler_2003}  and  \cite{Edgar_1978} for more general claims.
On the other hand, since the strong topology is finer than the bw-topology which in turn is finer than the weak topology, we infer  that $\mathcal{B}(H)\supseteq \mathcal{B}(H_{\mathrm{bw}})\supseteq \mathcal{B}(H_{\mathrm{w}})$. Thus we proved that
\begin{equation}\label{eqn-Borel sets}
\mathcal{B}(H)= \mathcal{B}(H_{\mathrm{bw}})= \mathcal{B}(H_{\mathrm{w}}).
\end{equation}
We infer that
the sets of  $\mathcal{B}(H_{\mathrm{w}})$, $\mathcal{B}(H_{\mathrm{bw}})$ and $\mathcal{B}(H)$-measurable functions are equal. Hence
\begin{equation}\label{eqn-B spaces}
B_b(H)=B(H_{\mathrm{bw}})=B_b(H_{\mathrm{w}}).
\end{equation}

Let us now recall the following fundamental definition,  see \cite{Maslowski+Seidler_1999}.
We say that a family $\{P_t\}_{t\ge 0}$ is sequentially weakly Feller iff
\begin{equation}\label{def-swFelller}
P_t:SC_b(H_{\mathrm{w}}) \to SC_b(H_{\mathrm{w}}), \;\;\; t\ge 0.
\end{equation}

Thanks to the results of Ondrej\'at \cite{On2005} quoted before, we deduce  that
 the family $\{P_t\}_{t\ge 0}$ is also a Markov semigroup.

Given a sequentially weakly Feller  Markov semigroup on a separable Hilbert space $H$,
 we can define an invariant measure $\mu$ for equation \eqref{sns} as a
Borel probability measure  on $H$ such that for any time $t\ge 0$
\begin{equation}\label{def-invmeas}
\int_H P_t \phi\ d\mu=\int_H \phi\ d\mu, \;\; \forall  \phi\in C_b(H_{\mathrm{w}}).
\end{equation}
This definition is meaningful as
the LHS  of \eqref{def-invmeas} makes sense. Indeed, if
$\phi\in C_b(H_{\mathrm{w}})\subset  SC_b(H_{\mathrm{w}})$, 
then    by \eqref{def-swFelller} and \eqref{eqn-C spaces}
$P_t\phi\in  SC_b(H_{\mathrm{w}})=C_b(H_{\mathrm{bw}})\subset B_b
(H_{\mathrm{bw}})=B_b(H)$; therefore  the integral in the LHS is well defined.

Moreover, it is well known that
the set of continuous functions is a determining set for a Borel  measure.
In particular,
the  set  $C_b(H_{\mathrm{w}})$
is  a  determining set for the measure $\mu$, i.e. if $\mu_i$, $i=1,2$ are
two Borel probability measures on $\mathcal{B}(H_{\mathrm{w}})$ such that
$\int_H \phi\ d\mu_1=\int_H \phi\ d\mu_2$, for every $\phi\in C_b(H_{\mathrm{w}})$, then
$\mu_1=\mu_2$.
Therefore, relationship \eqref{def-invmeas} corresponds to the invariance (in
time) of the law of the random variable $v(t;x)$.

\begin{remark}
Often the definition of invariant measure is given  when the Markov
semigroup is Feller, i.e.  $P_t:C_b(H) \to C_b(H)$ for any $t\ge 0$, see, e.g. \cite{dpz}. Indeed, the set $C_b(H)$
is still a determining set for the measure $\mu$ and  the LHS  of \eqref{def-invmeas} makes sense since
$P_t\phi\in C_b(H)\subset B_b(H)$.
\end{remark}

\section{Invariant measures for $d=2$}
We consider equation \eqref{sns} in $\mathbb R^2$.
In \cite{noi} we proved the existence and uniqueness of solutions for
this equation when $\gamma=0$. Dealing now with the case
$\gamma>0$, we can prove in the same way the same result. Indeed,
as in \cite{noi}
we split the analysis of equation \eqref{sns} for $v$ in two
subproblems involving the processes  $z$ and $u$ (with $v=z+u$), where
\begin{equation}\label{OU}
dz(t)+Az(t)\ dt+\gamma z(t)\ dt=G(v(t))\,dw(t),\;t\in (0,T];
\qquad z(0)=0
\end{equation}
and
\begin{equation}\label{eq:u}
\frac {d u}{dt}(t) +Au(t) +\gamma u(t)+B(v(t),v(t))=f(t),\;t\in (0,T];
\qquad u(0)=x
\end{equation}
The solution of equation \eqref{OU} is
\[
z(t)=\int_0^t  e^{-\gamma(t-s)}e^{-(t-s)A}G(v(s))\,dw(s)
\]
and the basic energy equality for equation \eqref{eq:u} is
\begin{multline}
\frac 12 \frac {d}{dt}\|u(t)\|_{H}^2 +\|\nabla u(t)\|_{H}^2+\gamma \|u(t)\|^2_H
\\
= -\langle B(z(t)+u(t),z(t)+u(t)), u(t)\rangle + \langle f(t), u(t)\rangle .
\end{multline}
This shows that we can work on  equations \eqref{OU} and \eqref{eq:u}
as we did in \cite{noi} when $\gamma=0$.
In this way,
we can prove first the existence
of a martingale solution and then  the pathwise uniqueness, see Theorem 4.2 in \cite{noi}.
Hence by invoking the Yamada-Watanabe Theorem, see
\cite{IW},  we deduce the existence of a strong solution too (in the probabilistic sense).
\begin{theorem}\label{th:esist2}
Let  $d=2$. If  $x\in H$,  $f \in L^p_{\mathrm{loc}}([0,\infty);H^{-1})$ for some $p>2$ and
 assumptions {\bf (G1)}-{\bf (G2)}-{\bf (G3)} are satisfied,
then   there exists a martingale solution
$\left((\tilde\Omega,\tilde{\mathbb F}, \{\tilde{\mathbb F}_t\}_{t \in [0,\infty)},\tilde{\mathbb P}), \tilde w,\tilde v\right)$ of
equation \eqref{sns} on the time interval $[0,\infty)$ with deterministic initial velocity $x$.
Moreover,  the solution $\tilde v$ satisfies the following
\[
\tilde v \in C([0,\infty);H) \cap L^{\frac4{1-g}}_{\mathrm{loc}}([0,\infty);H^{\frac{1-g}2})
\cap L^{4}_{\mathrm{loc}}([0,\infty);L^4)
 \qquad \tilde{\mathbb P}-a.s.
\]
Finally, assuming also {\bf (G4)} there is pathwise
uniqueness for equation \eqref{sns}.
\end{theorem}
\begin{remark}\label{rem-th:esist2}
The above result has been proved on a fixed time interval. But it is not difficult to modify the proof in the spirit of our proofs of
Theorems \ref{th:mis-inv-3d-m} and \ref{th:mis-inv-3d} so that the result holds true also on the whole time interval. We have formulated our result in such a form. \\
The same comment applies to Theorem \ref{teo-snsm}.
\end{remark}
For  deterministic initial velocity $x\in H$,
we denote by $ v(t;x)$ the solution at time $t>0$.

Let us recall a result of Maslowski-Seidler \cite{Maslowski+Seidler_1999} about  the 
existence of an invariant measure. This is a modification of
the Krylov-Bogoliubov technique, see \cite{KB} and \cite{DPZ2}, the latter being successful in bounded  domains.
\begin{theorem}\label{MSprop}
Assume that\\
i) the semigroup $\{P_t\}_{t\ge 0}$ is sequentially weakly Feller in $H$;\\
ii) for any $\varepsilon >0$ there exists $R>0$ such that
\[
\sup_{T\ge 1} \frac 1T \int_0^T \mathbb P\left(\|v(t;0)\|_H>R\right)dt<\varepsilon .
\]
Then there exists  at least one invariant measure for equation \eqref{sns}.
\end{theorem}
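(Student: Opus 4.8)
The plan is to realise the invariant measure as a weak limit of time-averaged (Krylov--Bogoliubov) measures, carried out entirely in the weak topology of $H$ so that the tightness furnished by ii) can be exploited. Starting from the deterministic initial datum $0$, I would define for each $T\ge 1$ the Borel probability measure $\lambda_T$ on $H$ by
\[
\int_H \phi\,d\lambda_T=\frac1T\int_0^T (P_t\phi)(0)\,dt=\frac1T\int_0^T \mathbb E[\phi(v(t;0))]\,dt,\qquad \phi\in B_b(H).
\]
The goal is to produce a subsequence $\lambda_{T_n}$ with $T_n\to\infty$ converging to a probability measure $\mu$ and to check that $\mu$ satisfies the invariance relation \eqref{def-invmeas}, namely $\int_H P_s\phi\,d\mu=\int_H\phi\,d\mu$ for every $s\ge 0$ and every $\phi\in C_b(H_{\mathrm{w}})$.

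First I would establish tightness of $\{\lambda_T\}_{T\ge1}$ in the weak topology. Let $B_R=\{v\in H:\|v\|_H\le R\}$. Since $H$ is a separable Hilbert space, $B_R$ is compact and metrizable for the weak topology, and assumption ii) says precisely that for every $\eps>0$ there is $R$ with $\sup_{T\ge1}\lambda_T(H\setminus B_R)<\eps$. Thus $\{\lambda_T\}$ is tight on $H_{\mathrm w}$. Using a Prokhorov-type argument on the weakly compact metrizable balls $B_R$ (extracting convergent subsequences of the restrictions $\lambda_T|_{B_R}$ on each $B_{R_k}$ and diagonalising over $k$), I would extract $T_n\to\infty$ and a Borel probability measure $\mu$ such that $\int_H\phi\,d\lambda_{T_n}\to\int_H\phi\,d\mu$ for every $\phi\in C_b(H_{\mathrm w})$; tightness also guarantees that no mass escapes in the limit, so $\mu(H)=1$ and the estimate $\mu(H\setminus B_R)\le\eps$ transfers from the $\lambda_{T_n}$.

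The invariance then follows from the averaging identity. Fix $s\ge0$ and $\phi\in C_b(H_{\mathrm w})$. Because $\{P_t\}_{t\ge0}$ is a Markov semigroup (from the results of \cite{On2005} recalled above), one has $P_tP_s\phi=P_{t+s}\phi$, so
\[
\int_H P_s\phi\,d\lambda_{T_n}=\frac1{T_n}\int_0^{T_n}(P_{t+s}\phi)(0)\,dt=\frac1{T_n}\int_s^{T_n+s}(P_t\phi)(0)\,dt,
\]
and comparing with $\int_H\phi\,d\lambda_{T_n}=\frac1{T_n}\int_0^{T_n}(P_t\phi)(0)\,dt$ gives
\[
\Big|\int_H P_s\phi\,d\lambda_{T_n}-\int_H\phi\,d\lambda_{T_n}\Big|\le\frac{2s\,\|\phi\|_\infty}{T_n},
\]
which tends to $0$ as $n\to\infty$. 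Letting $n\to\infty$, the second integral converges to $\int_H\phi\,d\mu$; if I can also show the first converges to $\int_H P_s\phi\,d\mu$, then $\int_H P_s\phi\,d\mu=\int_H\phi\,d\mu$ for all $\phi\in C_b(H_{\mathrm w})$, which by \eqref{def-invmeas} means $\mu$ is invariant.

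The main obstacle is exactly this last convergence. By the sequentially weakly Feller hypothesis i) and \eqref{eqn-C spaces}, the function $P_s\phi$ lies in $SC_b(H_{\mathrm w})=C_b(H_{\mathrm{bw}})$, which is strictly larger than the class $C_b(H_{\mathrm w})$ against which $\lambda_{T_n}\to\mu$ is known. The plan to close this gap is to upgrade the convergence from $C_b(H_{\mathrm w})$ to $C_b(H_{\mathrm{bw}})$ using tightness: given $\psi\in C_b(H_{\mathrm{bw}})$ and $\eps>0$, choose $R$ with $\sup_n\lambda_{T_n}(H\setminus B_R)<\eps$ and $\mu(H\setminus B_R)<\eps$; on the weakly compact ball $B_R$ the bw- and weak topologies coincide, so $\psi|_{B_R}$ is weakly continuous and may be approximated by a function in $C_b(H_{\mathrm w})$, after which the known convergence together with the uniform control of the mass outside $B_R$ yields $\int_H\psi\,d\lambda_{T_n}\to\int_H\psi\,d\mu$. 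Applying this with $\psi=P_s\phi$ completes the argument. This approximation-and-extension step, together with the careful handling of the non-metrizable weak topology in the Prokhorov extraction, is where the real work lies; the remainder is standard Krylov--Bogoliubov bookkeeping.
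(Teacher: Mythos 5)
The paper itself gives no proof of this theorem --- it is quoted from Maslowski and Seidler \cite{Maslowski+Seidler_1999} --- but your argument is precisely the weak-topology Krylov--Bogoliubov scheme of that reference: tightness of the time averages $\lambda_T$ on weakly compact metrizable balls supplied by ii), extraction of a limit $\mu$ against $C_b(H_{\mathrm w})$, the averaging identity from the Markov property, and the essential upgrade of the convergence to test functions in $C_b(H_{\mathrm{bw}})$ (needed because i) only yields $P_s\phi\in SC_b(H_{\mathrm w})=C_b(H_{\mathrm{bw}})$) via the coincidence of the bw- and weak topologies on closed balls. Your proposal is correct and takes essentially the same approach as the source on which the paper relies.
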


Below we will verify  the two assumptions i) and ii)   in order to prove
\begin{theorem}\label{th:mis-inv-2d}
Let $d=2$.
If $f \in H^{-1}$ and  assumptions {\bf (G1)}-{\bf (G2)}-{\bf (G3)}-{\bf (G4)} are satisfied,
then there exists at least one invariant measure  $\mu$ for
equation \eqref{sns} and $\mu(H)=1$.
\end{theorem}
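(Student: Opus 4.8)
The plan is to apply the Maslowski--Seidler criterion of Theorem \ref{MSprop}, so the whole argument reduces to verifying its two hypotheses: (i) the sequential weak Feller property of $\{P_t\}_{t\ge 0}$, and (ii) the time-averaged tightness bound. Thanks to Theorem \ref{th:esist2}, under {\bf (G1)}--{\bf (G4)} equation \eqref{sns} admits, for every deterministic $x\in H$, a unique-in-law solution $v(\cdot;x)$ with paths in $C([0,\infty);H)$, so the family $\{P_t\}_{t\ge 0}$ defined in \eqref{eqn-Markov} is a well-defined Markov semigroup and both hypotheses are meaningful.

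To verify (i) I would fix $t\ge 0$ and $\phi\in SC_b(H_{\mathrm w})$, pick a sequence $x_n\rightharpoonup x$ in $H$ (necessarily bounded), and show $P_t\phi(x_n)\to P_t\phi(x)$. The core step is tightness of the laws of the solutions $v(\cdot;x_n)$ on the path space $Z_T$ of \eqref{eqn-Z_T} for some $T>t$. This follows from uniform bounds on $\n{v(\cdot;x_n)}_T$ --- obtained by the same a priori estimates that underlie Theorem \ref{th:esist2}, the bounds depending only on $\sup_n\|x_n\|_H$ and on the data --- together with the compactness Lemma \ref{lemmaZT}. With tightness at hand I would invoke Prokhorov and the Skorohod representation theorem to obtain, along a subsequence and on a new stochastic basis, almost sure convergence $\bar v_n\to\bar v$ in $Z_T$, and then pass to the limit in the weak formulation \eqref{sol-path}: the linear and damping terms converge by the $C([0,T];H_{\mathrm w})$ topology, the trilinear term by the $L^2(0,T;L^4)$ bound through \eqref{bL4}, and the stochastic integral by assumption {\bf (G3)}, which supplies precisely the weak/local continuity of $G(\cdot)^*\psi$ needed. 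This identifies $\bar v$ as a martingale solution issued from the law of $x$; uniqueness in law from Theorem \ref{th:esist2} forces the whole sequence to converge in law, whence $\mathbb E[\phi(v(t;x_n))]\to\mathbb E[\phi(v(t;x))]$ since $\phi$ is bounded and sequentially weakly continuous. Boundedness of $P_t\phi$ is immediate. The passage to the limit in the nonlinear and stochastic terms, together with the use of uniqueness in law, is the main obstacle.

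For (ii) I would exploit the damping $\gamma>0$, which in $\mathbb R^2$ plays the role of the Poincar\'e inequality available in bounded domains. Using the splitting $v=z+u$ of \eqref{OU}--\eqref{eq:u}, I would first bound the stochastic convolution $z$: assumptions {\bf (G1)}--{\bf (G2)} and the smoothing estimate \eqref{semigruppo}, reinforced by the factor $e^{-\gamma(t-s)}$, give bounds on $\mathbb E\|z(t)\|_H^2$ and $\mathbb E\|z(t)\|_{L^4}^4$ uniform in $t\ge 0$. For $u$ I would start from the energy equality, write the nonlinear term via $u=v-z$ and the cancellation $\langle B(v,v),v\rangle=0$ of \eqref{scambio} as $\langle B(v,v),z\rangle=-\langle B(v,z),v\rangle$, estimate it by the $L^4$ norms of $v$ and $z$, and absorb the forcing $|\langle f,u\rangle|\le\|f\|_{H^{-1}}\|u\|_{H^1}$ into the dissipation by Young's inequality. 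The damping term $\gamma\|u\|_H^2$ then yields, via Gronwall, a bound on $\mathbb E\|u(t)\|_H^2$ uniform in $t\ge 0$. Combining the two estimates gives $\sup_{t\ge 0}\mathbb E\|v(t;0)\|_H^2\le C$, and Chebyshev's inequality converts this into $\sup_{T\ge 1}\frac1T\int_0^T\mathbb P(\|v(t;0)\|_H>R)\,dt\le C/R^2<\varepsilon$ for $R$ large enough. With (i) and (ii) verified, Theorem \ref{MSprop} delivers an invariant measure $\mu$, and $\mu(H)=1$ because the entire construction lives in $H$.
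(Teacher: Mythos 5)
Your overall strategy (verify the two hypotheses of Theorem \ref{MSprop}) is exactly the paper's, and your treatment of the sequential weak Feller property matches the paper's Section 3.1 almost step for step; the only caveat there is that $Z_T$ is a nonmetric locally convex space, so one must invoke Jakubowski's generalization of the Skorohod representation theorem rather than the classical Prokhorov--Skorohod pair, and the uniform bounds on $\n{v(\cdot;x_n)}_T$ are bounds \emph{in probability}, not pathwise bounds. These are fixable presentation points, not gaps.

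The genuine gap is in your verification of (ii). You claim that the energy inequality for $u$ plus the damping $\gamma\|u\|_H^2$ yields, ``via Gronwall, a bound on $\mathbb E\|u(t)\|_H^2$ uniform in $t\ge 0$.'' This does not follow. After estimating the trilinear term (which, since $z\notin H^1$ here --- the noise only gives $z\in H^\delta$ with $\delta<1-g$ --- must be bounded through $\|z\|_{L^4}$), the energy inequality necessarily contains a term of the form $C\,\|z(t)\|_{L^4}^4\,\|u(t)\|_H^2$ on the right-hand side. The pathwise Gronwall factor is then $\exp\bigl(\int_s^t(-\gamma+C\|z(r)\|_{L^4}^4)\,dr\bigr)$: you cannot take the expectation inside this exponential, and there is no reason for $C\|z\|_{L^4}^4$ to be dominated by $\gamma$, so the factor need not decay and no uniform-in-time moment bound results. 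The paper's remedy is twofold and you are missing both ingredients: first, it replaces $z$ by the modified stochastic convolution $\zeta^\alpha$ of \eqref{zalpha} with an \emph{extra} damping parameter $\alpha$, so that by Lemma \ref{lemma-z2} the constants $C_{\alpha,2}$ and $C_{\alpha,4,4}$ bounding $\sup_t\mathbb E\|\zeta^\alpha(t)\|_H^2$ and $\sup_t\mathbb E\|\zeta^\alpha(t)\|_{L^4}^4$ tend to $0$ as $\alpha\to\infty$ (the price is the additional forcing $\alpha\zeta^\alpha$ in the equation for $u^\alpha$); second, it applies the Da Prato--G\c{a}tarek logarithmic transformation $\frac{d}{dt}\ln(\|u^\alpha(t)\|_H^2\vee R)$, which converts the troublesome product term into $1_{\{\|u^\alpha\|_H^2>R\}}(-\gamma+C_3\|\zeta^\alpha\|_{L^4}^4)$ and yields directly the time-averaged bound $\frac1T\int_0^T\mathbb P(\|u^\alpha(t)\|_H>R)\,dt<\varepsilon$ after choosing $\alpha$ and $R$ large --- without ever establishing a uniform second-moment bound on $v(t;0)$, which is indeed stronger than what is needed or proved.
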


\subsection{Sequentially weakly Feller}\label{sect-SwF}
We need to verify, under the assumptions of Theorem
\ref{th:mis-inv-2d}, that
the Markov semigroup $\{P_t\}_{t\ge 0}$
is sequentially weakly Feller in $H$. This means that  for any $t>0$ and any bounded and sequentially weakly continuous function $\phi:H\to \mathbb R$,
\[
\text{ if } x_k \rightharpoonup x \text{ in } H, \text{ then }
P_t\phi(x_k)\to P_t\phi(x).
\]

Let us fix $0<t<T<\infty$.
We are given a sequence $\{x_k\}_{k \in \mathbb N}$ weakly convergent
in $H$ to $x$. For each $k \in\mathbb N$, let
$v_k$ be the strong solution of \eqref{sns} on the time interval $[0,T]$ with initial
velocity $x_k$, given by Theorem \ref{th:esist2}. Set $v_k=z_k+u_k$, with
\begin{equation}
\displaystyle
\left\{\begin{aligned}
&dz_k(t)+Az_k(t)\ dt+\gamma z_k(t)\ dt=G(v_k(t))\,dw(t),t\in [0,T],\\
&\\
& z_k(0)=0
\end{aligned}
\right.
\end{equation}
 and
\begin{equation}
\displaystyle
\left\{\begin{aligned}
\frac {d u_k}{dt}(t) &+Au_k(t) +\gamma u_k(t)+B(v_k(t),v_k(t))=f,\;t\in [0,T];\\
u_k(0)&=x_k.
\end{aligned}
\right.
\end{equation}

First, we look for  bounds in probability for $u_k$ and $z_k$, uniform in $k$
in order to get tightness and then convergence of the sequence
$\{v_k\}_k$. This will lead to prove that $P_t\phi(x_k)\equiv{\mathbb E}[\phi( v_k(t;x_k))]
\to {\mathbb E}[\phi( v(t;x))]\equiv P_t\phi(x)$
for any  bounded and sequentially weakly continuous function $\phi:H\to \mathbb R$.

We proceed as in \cite{noi};
actually, the role of $\gamma>0$ is negligible in this section.

As far as the processes $z_k$ are concerned,
we appeal to  Lemma 3.2 and Lemma 3.3 in \cite{noi}; indeed
\[
z_k(t)=\int_0^t  e^{-\gamma(t-s)}e^{-(t-s)A}G(v_k(s))\,dw(s)
\]
so
each  process $z_k$ depends on $k$
through the process $v_k$ only (now the operator $G$ is fixed)
and we can proceed as in \cite{noi};
the operator $A+\gamma$ is no worse than the operator $A$.
This means that, given
$\beta, \delta\ge 0$ such that
\begin{equation}\label{cond-zc}
\beta+\frac \delta 2<\frac {1-g}2,
\end{equation}
we have
\[
\sup_{k} \mathbb E\|z_k\|_{C^\beta([0,T];H^\delta)}<\infty
\qquad\text{ and } \qquad \sup_{k} \mathbb E\|z_k\|_{L^{4}(0,T;L^4)}<\infty.
\]
This implies that
 for any $\varepsilon>0$ there exist positive constants
$R_i=R_i(\varepsilon)$ , $i = 1, 2, 3,4$, such that
\begin{align}
&\displaystyle\sup_k \mathbb{P}(\|z_k\|_{C^\beta([0,T];H^{-1})}>R_1) \le \varepsilon,\\
&\displaystyle\sup_k \mathbb{P}(\|z_k\|_{L^2(0,T;H^\delta)}>R_2) \le \varepsilon,\\
&\displaystyle\sup_k \mathbb{P}(\|z_k\|_{L^\infty(0,T;H)}>R_3) \le \varepsilon,
\\
\label{z4}
&\sup_k \mathbb{P}(\|z_k\|_{L^{4}(0,T;L^4)}>R_4) \le \varepsilon.
\end{align}

We look for analogous estimates for $u_k$.
Taking the $H$-product of equation \eqref{sns} with $u_k$, we get
\begin{multline}\label{energyest}
\frac 12 \frac {d}{dt}\|u_k(t)\|_{H}^2 +\|\nabla u_k(t)\|_{H}^2+\gamma \|u_k(t)\|^2_H
\\
= -\langle B(z_k(t)+u_k(t),z_k(t)+u_k(t)), u_k(t)\rangle + \langle f, u_k(t)\rangle .
\end{multline}
Estimating the RHS as in the proof of  Proposition 3.4 in
\cite{noi} and neglecting the positive term $\gamma \|u_k(t)\|^2_H$ in the LHS, we obtain
\begin{equation}\label{vH}
\begin{split}
\sup_{0\le t\le T} \|u_k(t)\|_{H}^2
&\le
\|x_k\|_H^2 e^{\int_0^T \phi_k(r) dr}+
 \int_0^T e^{\int_s^T \phi_k(r) dr}\psi_k(s)ds
 \\
&\le e^{\int_0^T \phi_k(r) dr}\left[ \|x_k\|_H^2 +
 \int_0^T \psi_k(s)ds\right]
\end{split}
\end{equation}
where
$\phi_k(t)=1+C_1\|z_k(t)\|_{L^4}^4 \mbox{ and }
\psi_k(t)=C_2 (\|z_k(t)\|_{L^4}^4+ \|f\|_{H^{-1}}^2)$
for suitable constants $C_1$ and $C_2$ independent of $k$,
and
\begin{equation}\label{vV}
\begin{split}
\int_0^T \|\nabla u_k(t)\|_{L^2}^2 dt
&\le
\|x_k\|_H^2
       +\int_0^T \Big( \phi_k(t) \|u_k(t)\|_{H}^2+\psi_k(t)\Big)dt
\\
& \le
\|x_k\|_H^2
       +\|u_k\|_{L^\infty(0,T;H)}^2      \int_0^T \phi_k(t) dt
       +\int_0^T \psi_k(t) dt
       .
\end{split}
\end{equation}
Since the sequence $\{x_k\}_k$ is weakly convergent in $H$, we have
$\sup_k \|x_k\|_H<\infty$; then, from \eqref{z4}, \eqref{vH}, \eqref{vV}
we
get uniform bound for $\|u_k\|_{L^\infty(0,T;H)}$ and
$\|u_k\|_{L^2(0,T;H^1)}$. Proceeding as in
the proof of Proposition 3.4 in \cite{noi}, we  also get
uniform bounds for $\|u_k\|_{C^{\frac 12}([0,T];H^{-1})}$ and $\|u_k\|_{L^4(0,T;L^4)}$.

Now we sum up our results for $z_k$ and $u_k$.  Let us  choose
$\beta\in (0,\frac 12]$ and $\delta\in (0,1]$ fulfilling
    \eqref{cond-zc} so we  also have
$C^{\frac 12}([0,T];H^{-1})\subseteq C^{\beta}([0,T];H^{-1})$ and
$L^2(0,T;H^1)\subseteq L^2(0,T;H^\delta)$. Hence
for $v_k=z_k+u_k$
we have the following result:
\\
for any $\varepsilon>0$ there exist positive constants
$R_i=R_i(\varepsilon)$, $i = 5,\ldots, 8$, such that
\[
\sup_k \mathbb P(\|v_k\|_{L^\infty(0,T;H)}>R_5)\le \varepsilon ,
\]
\[
\sup_k \mathbb P(\|v_k\|_{L^2(0,T;H^{\delta})}>R_6)\le \varepsilon ,
\]
\[
\sup_k \mathbb P(\|v_k\|_{L^4(0,T;L^4)}>R_7)\le \varepsilon ,
\]
\[
\sup_k \mathbb P(\|v_k\|_{C^{\beta}([0,T];H^{-1})}>R_8)\le \varepsilon .
\]
Hence, for any $\varepsilon>0$ there exist
$R=R(\varepsilon)>0$ such that
\begin{eqnarray*}
\sup_k \mathbb P\left(\|v_k\|_{L^\infty(0,T;H)} \right.&+&\|v_k\|_{L^2(0,T;H^{\delta})}
\\
  &+&\left. \|v_k\|_{L^4(0,T;L^4)}+\|v_k\|_{C^{\beta}([0,T];H^{-1})}>R\right)\le \varepsilon.
\end{eqnarray*}
Bearing in mind Lemma \ref{lemmaZT}, we obtain that the
sequence of laws of the processes $v_k$ is tight in $Z_T$, see also Lemma 5.5 in \cite{noi}. Now we appeal to the
 Jakubowski's \cite{Ja1997} generalization  of the
Skorohod Theorem to  nonmetric spaces.
We can do so  since
there exists a countable family $\{f_i:Z_T\to \mathbb R\}$
of ${\mathcal T}_T$-continuous functions, which separate points of $Z_T$, see the proof of Corollary 3.12 in \cite{BM2013}.
Therefore,  there exist a subsequence $\{v_{k_j}\}_{j=1}^\infty$,
a stochastic basis
$(\tilde \Omega, \tilde{\mathbb F}, \{\tilde{\mathbb F}\}_{0\le t \le T}, \tilde{\mathbb P})$,
$Z_T$-valued
Borel measurable variables $\tilde v$ and $\{\tilde v_j\}_{j=1}^\infty$ such that
for any $j$ the laws of $v_{k_j}$ and $\tilde v_j$
are the same and $\tilde v_j$ converges to $\tilde v$ ($\tilde {\mathbb P}$-a.s.)
with the topology ${\mathcal T}_T$. Moreover, one proves as in \cite{noi}
that  $\tilde v$ coincides with the solution of
\eqref{sns} with initial velocity $x$.

In particular, for fixed $t$, $\tilde v_j(t;x_{k_j})$ weakly
converges in $H$ to   $\tilde v(t;x)$, $\tilde {\mathbb P}$-a.s.,
according to the fact that
$\tilde v_j \to \tilde v$ in $C([0,T];H_{\mathrm w})$.
Hence, given any   sequentially weakly
continuous function $\phi:H\to\mathbb R$, we have that
$\phi(\tilde v_{j}(t;x_{k_j})) \to \phi(\tilde v(t;x))$ $\tilde {\mathbb P}$-a.s.
and therefore, when $\phi$ is also bounded,  by invoking the Lebesque Dominated Convergence (LDC) theorem we
deduce that  $\tilde{\mathbb E}[\phi(\tilde v_{j}(t;x_{k_j}))]
 \to \tilde{\mathbb E}[\phi(\tilde v(t;x))]$.
Using that $\tilde v$ has the same law as $v$ and   $\tilde v_j$ has the same
law as $v_{k_j}$, we infer that $\mathbb E[\phi(v_{k_j}(t;x_{k_j}))]
\to \mathbb E[\phi(v(t;x))]$.
Moreover, by the uniqueness the whole sequence converges.
This proves the sequentially weakly
Feller property in $H$.

\subsection{Boundedness in probability}
We need to verify, under the assumptions of Theorem
\ref{th:mis-inv-2d}, that for any  $\varepsilon >0$ there exists $R=R(\eps)>0$ satisfying
\begin{equation}\label{th:bdinprob}
\sup_{T\ge 1} \frac 1T \int_0^T \mathbb P\left(\|v(t;0)\|_H>R\right)dt<\varepsilon .
\end{equation}

We define the modified Ornstein-Uhlenbeck equation
\begin{equation}\label{OUdamped}
dz(t)+Az(t)\ dt+(\gamma+\alpha) z(t)\ dt=G(v(t))\,dw(t)
\end{equation}
with an additional damping $\alpha>0$, to be chosen later on.
Here $v$ is the strong  solution to equation  \eqref{sns}
with zero initial velocity,  given in Theorem \ref{th:esist2}.
We consider the stochastic convolution integral
\begin{equation}\label{zalpha}
\zeta^\alpha(t)=\int_0^t e^{-(\gamma+\alpha)(t-s)}e^{-(t-s)A}G(v(s))\,dw(s)
\end{equation}
solving \eqref{OUdamped} with vanishing initial data and find good estimates on it.
This result is independent of the space dimension $d$.
\begin{lemma}\label{lemma-z2}
Assume conditions {\bf (G1)}-{\bf (G2)} and let $v$ be a continuous
$H$-valued process.
Then for any $\alpha\ge 0$ and $q\ge 2$
there exist positive constants $C_{\alpha,2}, C_{\alpha,q,4}$ (depending also on
$g$ and $\gamma$) such that for
the
process $\zeta^\alpha$ given by \eqref{zalpha} we have
\begin{equation}\label{stima-zeta-alpha}
 \mathbb E \|\zeta^\alpha(t)\|_H^2 \le  C_{\alpha,2},
\qquad\qquad
\mathbb E \|\zeta^\alpha(t)\|_{H^{0,4}_\sol}^q \le  C_{\alpha,q,4}
\end{equation}
for any $t\ge 0$. Moreover
\begin{equation}\label{lim-a}
\lim_{\alpha\to+\infty}  C_{\alpha,2}=0 , \qquad\qquad
\lim_{\alpha\to+\infty}  C_{\alpha,q,4}=0.
\end{equation}
\end{lemma}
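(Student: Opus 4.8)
The plan is to treat the two moment bounds separately; in both cases, after factoring the smoothing action of the semigroup through the regularity gap $g$, everything reduces to one and the same elementary time integral. A preliminary remark that simplifies matters: for each \emph{fixed} $t$, the process $\zeta^\alpha(t)=\int_0^t\Phi_t(s)\,dw(s)$ is an ordinary stochastic integral with $s$-adapted integrand $\Phi_t(s)=e^{-(\gamma+\alpha)(t-s)}e^{-(t-s)A}G(v(s))$, so no maximal inequality in the time variable is required. Moreover, since $G$ is uniformly bounded by \textbf{(G1)}--\textbf{(G2)} over all of $H$, the relevant $\gamma$-norms of $\Phi_t(s)$ are bounded \emph{deterministically}, independently of the random process $v$.

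First I would treat the $H$-estimate. As $H$ is a Hilbert space, the $\gamma$-radonifying norm equals the Hilbert--Schmidt norm and the It\^o isometry gives
\[
\mathbb E\|\zeta^\alpha(t)\|_H^2=\int_0^t e^{-2(\gamma+\alpha)(t-s)}\,\|e^{-(t-s)A}G(v(s))\|_{\gamma(Y;H)}^2\,ds.
\]
Using the right-ideal property of $\gamma$-radonifying operators, the smoothing bound \eqref{semigruppo} (Sobolev exponents $-g$ and $0$, $p=2$), and then \textbf{(G1)}, the integrand is dominated by $M^2K_{g,2}^2\,e^{-2(\gamma+\alpha)(t-s)}(1+(t-s)^{-g/2})^2$; substituting $r=t-s$ and enlarging to $(0,\infty)$ produces $C_{\alpha,2}=M^2K_{g,2}^2\int_0^\infty e^{-2(\gamma+\alpha)r}(1+r^{-g/2})^2\,dr$, which is finite precisely because $g\in(0,1)$ makes $r^{-g/2}$ and $r^{-g}$ integrable at the origin while the exponential controls infinity. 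For the $L^4=H^{0,4}_\sol$ estimate the space is no longer Hilbert, but it is UMD of martingale type $2$, so the one-sided Burkholder--Davis--Gundy inequality for $\gamma$-radonifying stochastic integrals applies: for $q\ge2$ one bounds $\mathbb E\|\zeta^\alpha(t)\|_{H^{0,4}_\sol}^q$ by $C_q$ times the $q/2$-power of $\int_0^t e^{-2(\gamma+\alpha)(t-s)}\|e^{-(t-s)A}G(v(s))\|_{\gamma(Y;H^{0,4}_\sol)}^2\,ds$. Estimating this integrand by \eqref{semigruppo} with $p=4$ and \textbf{(G2)} exactly as before yields $C_{\alpha,q,4}=C_q\big(M^2K_{g,4}^2\int_0^\infty e^{-2(\gamma+\alpha)r}(1+r^{-g/2})^2\,dr\big)^{q/2}$, finite by the same argument.

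Finally, for the limits \eqref{lim-a}, I would note that for each $r>0$ the integrand $e^{-2(\gamma+\alpha)r}(1+r^{-g/2})^2$ tends to $0$ as $\alpha\to\infty$ and is dominated, uniformly in $\alpha\ge0$, by the integrable majorant $e^{-2\gamma r}(1+r^{-g/2})^2$; dominated convergence then forces the common integral to $0$, whence $C_{\alpha,2}\to0$ and $C_{\alpha,q,4}\to0$ (one may equally compute the three resulting integrals via the Gamma function and read off the decay rates $(\gamma+\alpha)^{-1}$, $(\gamma+\alpha)^{g/2-1}$, $(\gamma+\alpha)^{g-1}$). The only genuinely delicate ingredient is the moment inequality in the non-Hilbertian space $L^4$, which rests on the $\gamma$-radonifying stochastic integral and the type-$2$ property of $L^4$; this is exactly the framework already established in \cite{noi}, and everything else is a routine combination of the ideal property, \eqref{semigruppo}, and the integral bound using $g<1$.
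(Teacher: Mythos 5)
Your proposal is correct and follows essentially the same route as the paper: Itô isometry in $H$ and the one-sided Burkholder-type inequality in the martingale type~2 space $H^{0,4}_{\sol}$, followed by the ideal property, the smoothing estimate \eqref{semigruppo} with regularity gap $g$, the uniform bounds from \textbf{(G1)}--\textbf{(G2)}, reduction to the integral $\int_0^\infty e^{-2(\gamma+\alpha)r}(1+r^{-g})\,dr$ (finite since $g<1$), and dominated convergence for the limits \eqref{lim-a}.
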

\begin{proof}
First, we find  the estimate for $ \mathbb E \|\zeta^\alpha(t)\|^2_H$. Using inequality  \eqref{semigruppo}  and assumption {\bf (G1)} we get
\[\begin{split}
\mathbb E \|\zeta^\alpha(t)\|_{H}^2
&\le    \mathbb E\left[ \int_{0}^t
    \|e^{-(\gamma+\alpha)(t-s)}  e^{-(t-s)A}G(v(s))\|^2_{\gamma(Y;H)} ds\right]
\\
&\le   \mathbb E\left[\int_0^t
       e^{-2(\gamma+\alpha)(t-s)}\|J^{g} e^{-(t-s)A}\|_{\mathcal L(H;H)}^2
              \|J^{-g} G(v(s))\|_{\gamma(Y;H)}^2 ds\right]
\\
&\le K_{g,2}^2 \int_{0}^t e^{-2(\gamma+\alpha)(t-s)}2M^2
\left[1+\frac 1{(t-s)^g}\right] \,ds\;
\\
&=2  K_{g,2}^2 M ^{2} \int_0^{t}  e^{-2(\gamma+\alpha) r}\left[
1+ \frac 1{r^{ g }} \right]\,dr
\\ &\le 2
K_{g,2}^2 M^{2} \int_0^{\infty}  e^{-2(\gamma+\alpha) r} \left[1+
  \frac 1{r^{ g }}\right] \,dr
\end{split}\]
Calling $C_{\alpha,2}$ the  expression on the RHS, because  $g<1$ we deduce the first  limit
behaviour \eqref{lim-a} by the LDC theorem  as $\alpha \to \infty$.

The second  estimate in \eqref{lim-a}
is obtained in the same way along the lines of the proof of Lemma 3.2 in \cite{noi}. Indeed,  using again inequality  \eqref{semigruppo}  and assumption {\bf (G1)} we get
\[\begin{split}
\mathbb E &\|\zeta^\alpha(t)\|_{H^{0,4}_\sol}^q
\\
&\le  C(q) \mathbb E\left[\int_0^t e^{-(\gamma+\alpha)(t-s)}
    \|   e^{-(t-s)A}G(v(s))\|^2_{\gamma(Y;H^{0,4}_\sol)} ds\right]^{q/2}
\\
&\le  C(q) \mathbb E\left[\int_0^t e^{-(\gamma+\alpha)(t-s)}
      \|J^{g } e^{-(t-s)A}\|_{\mathcal L(H^{0,4}_\sol;H^{0,4}_\sol)}^2
              \|J^{-g} G(v(s))\|_{\gamma(Y;H^{0,4}_\sol)}^2 ds\right]^{q/2}
\\
&\le  C(q) (K_{g,4})^q \left[ \int_0^t  e^{-(\gamma+\alpha)(t-s)}
2 \big(M^2+\frac {M^2}{(t-s)^{ g}}\big) ds \right]^{q/2}.
\end{split}\]
We estimate the time integral as in the previous case to conclude the proof.
\end{proof}

By the Chebyshev inequality, from \eqref{stima-zeta-alpha} we get
\begin{equation}\label{z-bdd}
\sup_{t\ge 0} \mathbb P\left( \|\zeta^\alpha(t)\|_{H}>R\right)
\le \frac {C_{\alpha,2}}{R^2}
\end{equation}
for any $R>0$. This gives the bound \eqref{th:bdinprob} for the process $\zeta^\alpha$.

Now we look for a similar result for the process
$u^\alpha=v-\zeta^\alpha$ solving
\begin{equation}\label{eq:ualpha}
\frac {d u}{dt}(t) +Au(t) +\gamma u(t) + B(v(t),v(t))
=
\alpha\zeta^\alpha(t)+f,
\qquad u(0)=0.
\end{equation}

For this aim we need the following result
\begin{proposition}\label{pro:bd-u}
Let $f \in H^{-1}$ and let $u^\alpha$ be the solution of
\begin{equation*}
\frac {d u}{dt}(t) +Au(t) +\gamma u(t) + B(u(t)+\zeta^\alpha(t),u(t)+\zeta^\alpha(t))
=
\alpha\zeta^\alpha(t)+f
\end{equation*}
with $u^\alpha(0)=0$ and $\zeta^\alpha$ given by \eqref{zalpha} under
the assumptions of Lemma \ref{lemma-z2}.\\
Then for any $\varepsilon >0$ there exist $\alpha, R>0$ such that
\[
\frac 1T \int_0^T \mathbb P \left(\|u^\alpha(t;0)\|_H>R\right) dt<\varepsilon
\]
for any $T>0$.
\end{proposition}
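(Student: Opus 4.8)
The plan is to test the $u^\alpha$-equation with $u^\alpha$, convert the cubic term into two pieces controllable by the two-dimensional Ladyzhenskaya inequality, and reduce everything to a scalar differential inequality with a \emph{random} coefficient; then to exploit the smallness of $\zeta^\alpha$ provided by Lemma \ref{lemma-z2} to make the damping beat the production and, finally, to pass to the time-averaged bound in probability by Chebyshev. First I would take the $H$-scalar product of the equation with $u^\alpha$ and get, with $v=u^\alpha+\zeta^\alpha$,
\[
\tfrac12\tfrac{d}{dt}\|u^\alpha\|_H^2+\|\nabla u^\alpha\|_{L^2}^2+\gamma\|u^\alpha\|_H^2 = -\langle B(v,v),u^\alpha\rangle+\alpha\langle \zeta^\alpha,u^\alpha\rangle+\langle f,u^\alpha\rangle .
\]
Using the antisymmetry relations \eqref{scambio}, every term of the form $\langle B(\cdot,u^\alpha),u^\alpha\rangle$ vanishes, so the cubic term collapses to $\langle B(u^\alpha,u^\alpha),\zeta^\alpha\rangle+\langle B(\zeta^\alpha,u^\alpha),\zeta^\alpha\rangle$.

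Next I would estimate these two pieces through the trilinear Hölder bound $|\langle B(a,b),c\rangle|\le\|a\|_{L^4}\|\nabla b\|_{L^2}\|c\|_{L^4}$ together with the planar inequality $\|w\|_{L^4}^2\le C\|w\|_{L^2}\|\nabla w\|_{L^2}$, and use Young's inequality to absorb the factors $\|\nabla u^\alpha\|_{L^2}^2$ into the dissipation; the term $\langle f,u^\alpha\rangle$ is handled by $\|f\|_{H^{-1}}\|u^\alpha\|_{H^1}$ plus Young, and $\alpha\langle\zeta^\alpha,u^\alpha\rangle$ by $\tfrac{\gamma}{2}\|u^\alpha\|_H^2+\tfrac{\alpha^2}{\gamma}\|\zeta^\alpha\|_H^2$. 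This yields
\[
\tfrac{d}{dt}\|u^\alpha\|_H^2+\gamma\|u^\alpha\|_H^2\le a(t)\,\|u^\alpha\|_H^2+b(t),
\]
with $a(t)=C\|\zeta^\alpha(t)\|_{L^4}^4$ and $b(t)=C\big(\|\zeta^\alpha(t)\|_{L^4}^4+\alpha^2\|\zeta^\alpha(t)\|_H^2+\|f\|_{H^{-1}}^2\big)$. Since $u^\alpha(0)=0$, Gronwall's lemma gives pathwise $\|u^\alpha(t)\|_H^2\le\int_0^t\exp\big(\int_s^t(a(r)-\gamma)\,dr\big)b(s)\,ds$.

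The role of the extra damping $\alpha$ is now to control the \emph{mean drift}. By Lemma \ref{lemma-z2} with $q=4$ one has $\mathbb E\,a(t)\le C\,C_{\alpha,4,4}$, and by \eqref{lim-a} $C_{\alpha,4,4}\to0$ as $\alpha\to\infty$; hence I fix $\alpha$ so large that $C\,C_{\alpha,4,4}<\gamma$, i.e. the damping strictly dominates the averaged production. With $\alpha$ frozen, $\mathbb E\,b(t)\le C\big(C_{\alpha,4,4}+\alpha^2 C_{\alpha,2}+\|f\|_{H^{-1}}^2\big)=:B_\alpha$, uniformly in $t$; note $B_\alpha$ is finite but possibly large (the forcing $\alpha\zeta^\alpha$ contributes $\alpha^2 C_{\alpha,2}$), which is harmless because $R$ is chosen only afterwards. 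The target is then a bound on $\tfrac1T\int_0^T\mathbb E\|u^\alpha(t)\|_H^2\,dt$ that is uniform in $T$; Chebyshev's inequality gives $\tfrac1T\int_0^T\mathbb P(\|u^\alpha(t)\|_H>R)\,dt\le R^{-2}\,\tfrac1T\int_0^T\mathbb E\|u^\alpha(t)\|_H^2\,dt$, and letting $R\to\infty$ makes the right-hand side smaller than $\varepsilon$, uniformly in $T$.

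The main obstacle is exactly the multiplicative, genuinely random coefficient $a(t)=C\|\zeta^\alpha(t)\|_{L^4}^4$: it cannot be absorbed into the dissipation $\|\nabla u^\alpha\|_{L^2}^2$, so one must confront the random Gronwall factor $\exp\big(\int_s^t(a-\gamma)\,dr\big)$. Pointwise exponential moments of the quartic $\|\zeta^\alpha\|_{L^4}^4$ are not available, so the argument cannot simply take expectations inside the Gronwall bound; instead it must rely on the strict negativity of the averaged drift $\gamma-C\,C_{\alpha,4,4}>0$ secured by choosing $\alpha$ large, on the uniform-in-time moment estimates of Lemma \ref{lemma-z2}, and on working throughout with time-averaged expectations rather than pointwise ones. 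Reconciling this random multiplicative coefficient with the linear damping so as to produce the $T$-uniform averaged bound is the step I expect to demand the most care, and it is the technical heart of the proposition.
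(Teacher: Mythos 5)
Your energy estimate is exactly the one in the paper: testing with $u^\alpha$, using \eqref{scambio} to reduce the trilinear term to $\langle B(u^\alpha,u^\alpha),\zeta^\alpha\rangle+\langle B(\zeta^\alpha,u^\alpha),\zeta^\alpha\rangle$, and applying Ladyzhenskaya and Young to reach
\[
\tfrac{d}{dt}\|u^\alpha\|_H^2\le-\gamma\|u^\alpha\|_H^2+C\big(\|\zeta^\alpha\|_{L^4}^4\|u^\alpha\|_H^2+\|\zeta^\alpha\|_{L^4}^4+\alpha^2\|\zeta^\alpha\|_H^2+\|f\|_{H^{-1}}^2\big),
\]
and your use of Lemma \ref{lemma-z2} and \eqref{lim-a} to make the coefficient of $\|u^\alpha\|_H^2$ small on average by taking $\alpha$ large is also the right instinct. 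But the final step is a genuine gap, and you essentially concede it yourself: your plan is to bound $\tfrac1T\int_0^T\mathbb E\|u^\alpha(t)\|_H^2\,dt$ uniformly in $T$ and then apply Chebyshev, yet the only pathwise control you have is the Gronwall bound containing $\exp\big(\int_s^t(C\|\zeta^\alpha(r)\|_{L^4}^4-\gamma)\,dr\big)$, whose expectation you cannot take because exponential moments of $\|\zeta^\alpha\|_{L^4}^4$ are unavailable. Knowing that $\mathbb E\,a(t)<\gamma$ does not control $\mathbb E\exp\big(\int_s^t(a-\gamma)\big)$, so the $T$-uniform averaged second-moment bound is never actually produced; the argument stops exactly at what you call ``the technical heart'' without supplying the resolving idea.

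The missing idea, which the paper takes from Da Prato--G\c{a}tarek \cite{DPG}, is to differentiate $\ln(\|u^\alpha(t)\|_H^2\vee R)$ instead of $\|u^\alpha(t)\|_H^2$. On the event $\{\|u^\alpha(t)\|_H^2>R\}$ the division by $\|u^\alpha(t)\|_H^2$ turns the multiplicative term into the additive term $1_{\{\|u^\alpha(t)\|_H^2>R\}}\big(-\gamma+C_3\|\zeta^\alpha(t)\|_{L^4}^4\big)$ and shrinks the inhomogeneous part to $C_3 R^{-1}\big(\|\zeta^\alpha\|_{L^4}^4+\alpha^2\|\zeta^\alpha\|_H^2+\|f\|_{H^{-1}}^2\big)$; no exponential ever appears, and only the first moments furnished by Lemma \ref{lemma-z2} are needed. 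Integrating in time, noting that the left side is nonnegative since $u^\alpha(0)=0$, and taking expectations yields directly
\[
\gamma\int_0^T\mathbb P\big(\|u^\alpha(t)\|_H^2>R\big)\,dt\le C_3T\Big(C_{\alpha,4,4}+\tfrac1R\big(C_{\alpha,4,4}+\alpha^2C_{\alpha,2}+\|f\|_{H^{-1}}^2\big)\Big),
\]
and the conclusion follows by first choosing $\alpha$ so that $C_3C_{\alpha,4,4}/\gamma<\varepsilon/2$ (note that $\alpha$ must depend on $\varepsilon$, not merely satisfy $C_3C_{\alpha,4,4}<\gamma$ as you propose, since this term is not divided by $R$) and then choosing $R$ large. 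Without this logarithmic truncation, or some equivalent device that avoids the random Gronwall exponential, your proof does not close.
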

\begin{proof}
We proceed as in the proof of Proposition 3.4 in \cite{noi}.
We take the $H$-scalar product of equation \eqref{eq:ualpha} with $u^\alpha$ and get
\begin{eqnarray*}
&&\frac 12 \frac {d}{dt}\|u^\alpha(t)\|_{H}^2 +\|\nabla u^\alpha(t)\|_{L^2}^2
+\gamma \|u^\alpha(t)\|_{H}^2
\\&=&-\langle B(u^\alpha(t)+\zeta^\alpha(t), u^\alpha(t)+\zeta^\alpha(t)),u^\alpha(t)\rangle
+\alpha\langle \zeta^\alpha(t),u^\alpha(t)\rangle
+\langle f,u^\alpha(t)\rangle
\\&=&
\langle B(u^\alpha(t), u^\alpha(t)),\zeta^\alpha(t)\rangle
+\langle B(\zeta^\alpha(t), u^\alpha(t)),\zeta^\alpha(t)\rangle
+\alpha\langle \zeta^\alpha(t),u^\alpha(t)\rangle
+\langle f,u^\alpha(t)\rangle
\\&
\le& C
\|u^\alpha(t)\|^{\frac 12}_{H}\|\nabla u^\alpha(t)\|_{L^2}^{\frac 32}
\|\zeta^\alpha(t)\|_{L^4}+
  \|\nabla u^\alpha(t)\|_{L^2}\|\zeta^\alpha(t)\|_{L^4}^2
\\&&\qquad\qquad\qquad\qquad
+\alpha\|u^\alpha(t)\|_{H}\|\zeta^\alpha(t)\|_{H}
+\|u^\alpha(t)\|_{H^1} \|f\|_{H^{-1}}
\\
&
\le&
\frac 12 \|\nabla u^\alpha(t)\|_{L^2}^2
+ \frac \gamma 2 \|u^\alpha(t)\|_{H}^2+
C \|\zeta^\alpha(t)\|_{L^4}^4 \|u^\alpha(t)\|_{H}^2
+ C \|\zeta^\alpha(t)\|_{L^4}^4
\\ &&\qquad\qquad\qquad\qquad
+ C \alpha^2 \|\zeta^\alpha(t)\|_{H}^2
+C \|f\|_{H^{-1}}^2.
\end{eqnarray*}
Hence
\begin{multline*}
\frac {d}{dt}\|u^\alpha(t)\|_{H}^2\le -\gamma \|u^\alpha(t)\|_{H}^2
\\+ C_3 \left( \|\zeta^\alpha(t)\|_{L^4}^4 \|u^\alpha(t)\|_H^2
+  \|\zeta^\alpha(t)\|_{L^4}^4+ \alpha^2  \|\zeta^\alpha(t)\|_{H}^2
+ \|f\|_{H^{-1}}^2 \right)
\end{multline*}
for a constant  $C_3$ independent of $\alpha$.
Therefore, following the Da Prato-G\c{a}tarek technique from \cite{DPG},
we  infer that for every  $R>0$ the following happens
\[\begin{split}
\frac{d}{dt} &\ln(\|u^\alpha(t)\|_{H}^2 \vee R)
\\&
=1_{\{\|u^\alpha(t)\|_{H}^2>R\}} \frac1{\|u^\alpha(t)\|^2_{H}}\frac d{dt}
\|u^\alpha(t)\|_{H}^2
\\&
\le 1_{\{\|u^\alpha(t)\|_{H}^2>R\}} (-\gamma+C_3\|\zeta^\alpha(t)\|_{L^4}^4 )
\\&
\qquad +  1_{\{\|u^\alpha(t)\|_{H}^2>R\}} C_3\frac{
 \|\zeta^\alpha(t)\|_{L^4}^4+ \alpha^2 \|\zeta^\alpha(t)\|_{H}^2
+  \|f\|_{H^{-1}}^2}{\|u^\alpha(t)\|_{H}^2}
\\&
\le 1_{\{\|u^\alpha(t)\|_{H}^2>R\}} (-\gamma+C_3\|\zeta^\alpha(t)\|_{L^4}^4 )
\\&
\qquad +   \frac {C_3}R \left(
 \|\zeta^\alpha(t)\|_{L^4}^4
 + \alpha^2 \|\zeta^\alpha(t)\|_{H}^2
 +  \|f\|_{H^{-1}}^2\right).
\end{split}
\]
We integrate in time and take expectation; since $u^\alpha(0)=0$
we get that the time integral in the LHS is non negative and therefore
\[\begin{split}
\gamma \int_0^T\mathbb P\left(\|u^\alpha(t)\|_{H}^2>R\right) & dt
\le
C_3 T \sup_{t\ge0} \mathbb E [\|\zeta^\alpha(t)\|_{L^4}^4]
\\&\hspace{-2truecm}\lefteqn{+
\frac { C_3}R T\left(\sup_{t\ge 0}\mathbb E[\|\zeta^\alpha(t)\|_{L^4}^4]+
\alpha^2 \sup_{t\ge 0}\mathbb E [\|\zeta^\alpha(t)\|_{H}^2]
+\|f\|_{H^{-1}}^2\right)}
\\&
\le
T C_3 \left( C_{\alpha,4,4}+\frac {C_{\alpha,4,4}+\alpha^2 C_{\alpha,2}+\|f\|^2_{H^{-1}}}R\right).
\end{split}
\]
Now, bearing in mind \eqref{lim-a} we find that for $\alpha$ and $R$
suitably chosen the quantity
 $\dfrac 1T \displaystyle\int_0^T\mathbb
P(\|u^\alpha(t)\|_{H}^2>R)\ dt$ can be as small as we want, uniformly in time.
\end{proof}

Therefore, merging \eqref{z-bdd} and Proposition \ref{pro:bd-u}
 we get the bound \eqref{th:bdinprob} for the  process $v=\zeta^\alpha+u^\alpha$.

\section{Stationary solutions for $d=3$}
Working in the whole space, we can  prove only  the existence but not uniqueness
of martingale solutions for equation \eqref{sns}.
Hence we cannot define the Markov semigroup and  a fortiori even the
invariant measures.
But if we regularise the equation, we can prove similar results as in
$\mathbb R^2$; this way of approaching the three dimensional
Navier-Stokes equation
by regularizing the nonlinearity in order to get an equation with the
same level of difficulty as the two dimensional one,
goes back to the work of Leray \cite{Leray}. So we first
approximate equation \eqref{sns} by
\begin{equation}\label{snsm}
dv(t)+[Av(t)+\gamma v(t)+B_m\left(v(t),v(t)\right)]dt
=
G(v(t))\,dw(t)+f(t)\ dt
\end{equation}
with initial velocity $v(0)=x$.
The smoother operator $B_m$  will be defined in the next section.
We shall prove the existence of a unique solution
 for the approximating equation \eqref{snsm}.
Moreover, as in the $\mathbb R^2$ case,
there exists at least one invariant measure $\mu_m$. Considering the sequence
$\{v_m\}_{m \in \mathbb N}$ of stationary solutions of \eqref{snsm}
whose marginals at
a fixed time are $\mu_m$, we shall pass to the limit as $m\to\infty$
in order to get a stationary solution for the original equation \eqref{sns}.

\subsection{Smoothing}
In this section we investigate the smoothed equation \eqref{snsm}.
We define the regularization as follows.
For any $m>0$ let
$\rho_m(\xi)=(\frac m{2\pi})^{\frac 32} e^{-\frac m2|\xi|^2}$, $\xi
\in \mathbb R^3$,
and
\[
B_m(u,v)=B(\rho_m\ast u,v)
\]
where $\ast$ denotes the convolution. For any $1\le p <\infty$ we have
 $\|\rho_m\|_{L^p}=(\frac m{2\pi})^\frac 32 (\frac
{2\pi}{mp})^{\frac 3{2p}}$;
in particular
$\|\rho_m\|_{L^1}=1$.

By property \eqref{scambio} of the bilinear map $B$ we  have
\begin{equation}\label{scambio-m}
\langle B_m(u,v),z\rangle =-\langle B_m(u,z),v\rangle , \qquad
\langle B_m(u,v),v\rangle=0
\end{equation}
for any $ u, v,z \in H^1$.
In addition the following estimates hold.
\begin{lemma}
For any $m>0$ we have
\begin{eqnarray}\label{stimaBm-L4}
\|B_m(u,v)\|_{H^{-1}}&\le& \|u\|_{L^4}\|v\|_{L^4}
\\ \label{laCM}
\|B_m(u,v)\|_{H^{-1}}&\le& \|\rho_m\|_{L^2} \|u\|_{H}\|v\|_{H}
\\ \label{Bm1}
\|B_m(u,v)\|_{H^{-1-g}} &\le &C \|\rho_m\|_{L^{\frac 6{4+g}}} \|u\|_{H} \|v\|_{H^{\frac{1-g}2}}
\\ \label{Bm2}
\|B_m(u,v)\|_{H^{-1-g}}&\le&  C \|\rho_m\|_{L^{\frac 6{4+g}}} \|u\|_{H^{\frac{1-g}2}} \|v\|_{H}
\end{eqnarray}
\end{lemma}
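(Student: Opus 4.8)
The plan is to derive all four inequalities from the defining duality of $B_m$, namely $\langle B_m(u,v),z\rangle=\int_{\mathbb R^3}(\rho_m\ast u)\cdot\nabla v\cdot z\,d\xi$, combined with Young's convolution inequality, the Sobolev embeddings $H^s\hookrightarrow L^q$ with $\frac1q=\frac12-\frac s3$ (valid for $0\le s<\frac32$, which covers all the orders appearing below since $g\in(0,1)$), and---crucially---the skew-symmetry \eqref{scambio-m} to move the gradient off the low-regularity factors.

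The first bound \eqref{stimaBm-L4} is immediate: applying \eqref{bL4} to $B(\rho_m\ast u,v)$ and then Young's inequality with $\|\rho_m\|_{L^1}=1$ gives $\|B_m(u,v)\|_{H^{-1}}\le\|\rho_m\ast u\|_{L^4}\|v\|_{L^4}\le\|u\|_{L^4}\|v\|_{L^4}$. For \eqref{laCM} I would test against $z\in H^1$ and use \eqref{scambio-m} to write $\langle B_m(u,v),z\rangle=-\int(\rho_m\ast u)\cdot\nabla z\cdot v$; Hölder in $L^\infty\cdot L^2\cdot L^2$ together with the Young estimate $\|\rho_m\ast u\|_{L^\infty}\le\|\rho_m\|_{L^2}\|u\|_{L^2}$ then yields the claim after taking the supremum over $\|z\|_{H^1}\le1$.

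The two remaining bounds are the substantive ones, and both exploit the same device: test against $z$ in the dual space $H^{1+g}$ and use \eqref{scambio-m} to shift the derivative onto $z$, which lives in a favorable space. For \eqref{Bm1} I would estimate $|\int(\rho_m\ast u)\cdot\nabla z\cdot v|$ by Hölder with the exponent triple $\tfrac{6}{1+g},\tfrac{6}{3-2g},\tfrac{6}{2+g}$ (whose reciprocals sum to $1$), then apply Young to $\rho_m\ast u$ (producing exactly $\|\rho_m\|_{L^{6/(4+g)}}\|u\|_{L^2}$), the embedding $H^g\hookrightarrow L^{6/(3-2g)}$ to $\nabla z$ (producing $\|z\|_{H^{1+g}}$), and $H^{(1-g)/2}\hookrightarrow L^{6/(2+g)}$ to $v$. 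For \eqref{Bm2} the identical scheme with the triple $\tfrac3g,\tfrac{6}{3-2g},2$ works: Young combined with $H^{(1-g)/2}\hookrightarrow L^{6/(2+g)}$ gives $\|\rho_m\|_{L^{6/(4+g)}}\|u\|_{H^{(1-g)/2}}$, the Sobolev embedding handles $\nabla z$, and the final factor is simply $\|v\|_{L^2}=\|v\|_H$.

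The only genuine work is the bookkeeping of exponents: one must check that each Hölder triple has reciprocals summing to $1$, that the Young exponent on $\rho_m$ comes out to be precisely $\tfrac{6}{4+g}$, and that the Sobolev indices $g$, $(1-g)/2$ used on $\nabla z$, $u$ and $v$ all stay below the threshold $\tfrac32$ for every $g\in(0,1)$. I expect this verification---not any deep estimate---to be the heart of the argument, and the skew-symmetry \eqref{scambio-m} is what makes it possible, since it forces the unavoidable gradient onto the smooth test function rather than onto the rough factors $u$ and $v$.
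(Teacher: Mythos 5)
Your proposal is correct and follows essentially the same route as the paper: the skew-symmetry \eqref{scambio-m} is used to shift the gradient onto the test function in the dual space, after which Young's convolution inequality (with exponent $\tfrac{6}{4+g}$ on $\rho_m$) and the Sobolev embeddings $H^g\hookrightarrow L^{6/(3-2g)}$, $H^{(1-g)/2}\hookrightarrow L^{6/(2+g)}$ give \eqref{Bm1} and \eqref{Bm2}, exactly as in the paper, while \eqref{stimaBm-L4} and \eqref{laCM} follow from the same duality with the simpler H\"older triples. The only cosmetic difference is that for \eqref{stimaBm-L4} you invoke \eqref{bL4} directly rather than re-running the duality argument, which is equivalent.
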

\begin{proof}
We use repeatedly Young and Sobolev inequalities and that
\[
\|B(u,v)\|_{H^{-a}}=\sup_{\|\phi\|_{H^a}\le 1}|\langle B(u,v),\phi\rangle|
=\sup_{\|\phi\|_{H^a}\le 1}|\langle B(u,\phi),v\rangle|
\]
for smooth enough vectors. Then, by density,  the same holds for the regularity
involved at each step.

We prove \eqref{stimaBm-L4}, which is the only uniform estimate.
\[\begin{split}
|\langle B_m(u,\phi),v\rangle|
&=|\langle B(\rho_m\ast u,\phi),v\rangle|
\\&
\le \|\rho_m\ast u\|_{L^4}\|\nabla \phi\|_{L^2}\|v\|_{L^4}
\\&
\le \|\rho_m\|_{L^1}\|u\|_{L^4}\|\phi\|_{H^1}\|v\|_{L^4}
\end{split}\]

We prove \eqref{laCM}.
\[
\begin{split}
|\langle B_m(u,\phi),v\rangle|
&=|\langle B(\rho_m\ast u,\phi),v\rangle|
\\&
\le \|\rho_m\ast u\|_{L^\infty}\|\nabla \phi\|_{L^2}\|v\|_{L^2}
\\&
\le \|\rho_m\|_{L^2}\|u\|_{L^2}\|\phi\|_{H^1}\|v\|_{L^2}
\end{split}\]

We prove  \eqref{Bm1}
\[
\begin{split}
|\langle B_m(u,\phi),v\rangle|
&=|\langle B(\rho_m\ast u,\phi), v\rangle|\\
&\le \|\rho_m\ast u\|_{L^{\frac 6{1+g}}} \|\nabla\phi\|_{L^{\frac 6{3-2g}}} \|v\|_{L^{\frac 6{2+g}}}
\\
& \le C \|\rho_m\|_{L^{\frac 6{4+g}}} \|u\|_{L^2} \|\nabla\phi\|_{H^g} \|v\|_{H^{\frac {1-g}2}}
\end{split}
\]
and similarly  \eqref{Bm2}
\[
\begin{split}
|\langle B_m(u,\phi),v\rangle|
&=|\langle B(\rho_m\ast u,\phi), v\rangle|
\\
&\le \|\rho_m\ast u\|_{L^{\frac 3g}} \|\nabla\phi\|_{L^{\frac 6{3-2g}}} \|v\|_{L^2}
\\
& \le C \|\rho_m\|_{L^{\frac 6{4+g}}} \|u\|_{L^{\frac 6{2+g}}} \|\nabla\phi\|_{H^g} \|v\|_{L^2}
\\
& \le C \|\rho_m\|_{L^{\frac 6{4+g}}} \|u\|_{H^{\frac {1-g}2}} \|\phi\|_{H^{1+g}} \|v\|_{H} \end{split}
\]
\end{proof}

In the limit as $m \to \infty$  we recover the operator $B$.
Bearing in mind \eqref{stimaB-a} with
 given  $a>\frac 52$, we get for all  $u,v \in H$
\begin{equation}\label{lim-Bm} 
\|B_m(u,v)-B(u,v)\|_{H^{-a}}=\|B(\rho_m\ast u-u,v)\|_{H^{-a}}
\le C  \|\rho_m \ast u-u\|_{L^2} \|v\|_{L^2}.
\end{equation}
Note that the RHS in \eqref{lim-Bm} converges to $0$ as $m\to \infty$. Indeed,
 by the Plancherel equality
\[
\|\rho_m \ast u-u\|_{L^2}=\|\hat \rho_m\hat u-\hat u\|_{L^2}
\]
with the Fourier transform
 $\hat \rho_m(\xi)=(2\pi)^{-\frac 32} e^{-\frac {|\xi|^2}{2m}}
\to 1$ pointwise as $m\to\infty$ and $\|\hat \rho_m\hat u\|_{L^2}\le
(2\pi)^{-\frac 32}\|\hat u\|_{L^2}$. Hence,  $\hat \rho_m\hat u-\hat u\to 0$
pointwise and,
by dominated convergence,
$\|\hat \rho_m \hat u-\hat u\|_{L^2}\to 0 $ for any given $u\in L^2$.

Here is our first result on the smoothed equation for any $m>0$.
It involves the norm
\[
\n{v}_T=\|v\|_{L^\infty(0,T;H)}+\|v\|_{L^2(0,T;H^\delta)}+
 \|v\|_{L^{\frac 83}(0,T;L^4)}+\|v\|_{C^\beta([0,T];H^{-1})}
\]
with $ \beta \in (0,\frac 14]$ and $\delta\in (0,1]$
 such that $\beta+\frac \delta 2<\frac {1-g}2$.
\begin{theorem}\label{teo-snsm}
Let $d=3$.
If $x \in H$,  $f \in L^p_{\mathrm{loc}}([0,\infty);H^{-1})$ for some $p>2$ and
assumptions {\bf (G1)}-{\bf (G2)}-{\bf (G3)}-{\bf (G4)} are satisfied,
then   there exists a unique  solution $v_m$ of equation \eqref{snsm}
on the time interval $[0,\infty)$ with initial velocity $x$;
in addition, there exist $\beta \in (0,\frac 14]$ and $\delta\in (0,1]$
 with  $\beta+\frac \delta 2<\frac {1-g}2$ such that $\mathbb P$-a.s
\[
v_m\in  C([0,\infty);H)\cap L^{\frac 83}_{\mathrm{loc}}([0,\infty);L^4)\cap L^2_{\mathrm{loc}}([0,\infty);H^\delta)\cap
C^\beta_{\mathrm{loc}}([0,\infty);H^{-1})
.
\]
and, for each $T>0$,
\begin{multline}\label{stime-vm-per-tight}
\n{v_m}_T^2
\le C_4 \Big[ (1+T^2) \big(1
                    + \Psi(z_m,T)^2 (1+\Phi(z_m,T)^2) e ^{2\Phi(z_m,T)} \big)
        \\  +1+\|z_m\|^4_{L^{\frac 83}(0,T;L^4)}+(1+T)
         \|z_m\|^2_{C^\beta([0,T];H^\delta)} +T^{\frac 12} \|f\|^2_{L^2(0,T;H^{-1})} \Big]
\end{multline}
$\mathbb P$-a.s., where
\begin{eqnarray}
\Psi(z_m,T)&=&\|x\|_H^2+C_5\|z_m\|_{L^{4}(0,T;L^4)}^4+C_5 \|f\|_{L^2(0,T;H^{-1})}^2,
\label{lapsi}
\\
\Phi(z_m,T)&=&C_6\|z_m\|_{L^{8}(0,T;L^4)}^8,
\label{laphi}
\end{eqnarray}
the positive constants $C_4, C_5, C_6$ are independent of $m$ and
$T$, and
the process $z_m$ is the solution of equation
\begin{equation}\label{eq:zm}
\left\{
\begin{aligned}
dz(t)&+Az(t)\ dt+\gamma z(t)\ dt=G(v_m(t))\,dw(t),\;t>0;\\ z(0)&=0.
\end{aligned}
\right.
\end{equation}
\end{theorem}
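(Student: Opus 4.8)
The plan is to mirror the $d=2$ argument the authors have already carried out, but now for the smoothed equation \eqref{snsm}, where the regularized nonlinearity $B_m$ enjoys the extra estimates \eqref{laCM}--\eqref{Bm2} that make the three-dimensional problem tractable. First I would split $v_m = z_m + u_m$, with $z_m$ solving the Ornstein--Uhlenbeck type equation \eqref{eq:zm} and $u_m$ solving the companion equation
\[
\frac{d u_m}{dt}(t) + A u_m(t) + \gamma u_m(t) + B_m(v_m(t),v_m(t)) = f(t),
\qquad u_m(0)=x.
\]
Existence and uniqueness of $v_m$ follow by the same scheme as in \cite{noi} and as recalled for $d=2$: the estimates \eqref{stimaBm-L4}--\eqref{Bm2} guarantee that $B_m$ is locally Lipschitz on $H$ (via \eqref{laCM}, since $\|\rho_m\|_{L^2}<\infty$), so one obtains a martingale solution, then pathwise uniqueness under {\bf (G4)}, and finally a strong solution through Yamada--Watanabe. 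The regularity class for $v_m$ is inherited from the stochastic convolution bounds on $z_m$ (Lemma~\ref{lemma-z2} with $\alpha=0$ giving $z_m \in C^\beta([0,T];H^\delta)\cap L^{8/3}(0,T;L^4)$ for the admissible $\beta,\delta$) combined with the deterministic energy estimate for $u_m$.

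The heart of the statement is the pathwise bound \eqref{stime-vm-per-tight}, and I would derive it from the energy identity for $u_m$. Taking the $H$-scalar product of the $u_m$-equation with $u_m$ and using the cancellation \eqref{scambio-m} (so that $\langle B_m(v_m,v_m),u_m\rangle = \langle B_m(v_m,v_m),v_m - z_m\rangle = -\langle B_m(v_m,v_m),z_m\rangle$) yields, after Young's inequality to absorb $\tfrac12\|\nabla u_m\|_{L^2}^2$,
\[
\frac{d}{dt}\|u_m(t)\|_H^2 + \|\nabla u_m(t)\|_{L^2}^2
\le \phi_m(t)\,\|u_m(t)\|_H^2 + \psi_m(t),
\]
where $\phi_m$ is controlled by $\|z_m(t)\|_{L^4}$ and $\psi_m$ by $\|z_m(t)\|_{L^4}$ and $\|f(t)\|_{H^{-1}}$. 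The crucial point is that, because $B_m$ is smoother, the terms produced here are of the same structure as in the two-dimensional case, so Gronwall's lemma delivers a bound of the form $\|u_m\|_{L^\infty(0,T;H)}^2 \lesssim \Psi(z_m,T)\,e^{\Phi(z_m,T)}$, which explains the exponential factor $e^{2\Phi(z_m,T)}$ and the definitions \eqref{lapsi}--\eqref{laphi}. Integrating the differential inequality then also bounds $\|u_m\|_{L^2(0,T;H^1)}^2 \hookrightarrow \|u_m\|_{L^2(0,T;H^\delta)}^2$; the $L^{8/3}(0,T;L^4)$ norm comes from interpolation between $L^\infty(0,T;H)$ and $L^2(0,T;H^1)$ together with the Sobolev embedding in $d=3$; and the $C^\beta([0,T];H^{-1})$ seminorm is estimated from the $u_m$-equation by bounding each term (using \eqref{laCM} for the nonlinearity) in $H^{-1}$. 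Adding the corresponding norms of $z_m$ back in, via $v_m = z_m + u_m$ and the triangle inequality, assembles \eqref{stime-vm-per-tight}.

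I expect the main obstacle to be keeping every constant uniform in both $m$ and $T$, which is precisely what the authors emphasize ($C_4,C_5,C_6$ independent of $m$ and $T$). The uniformity in $m$ is delicate because several of the new estimates \eqref{laCM}--\eqref{Bm2} carry $m$-dependent prefactors $\|\rho_m\|_{L^2}$ or $\|\rho_m\|_{L^{6/(4+g)}}$ that blow up as $m\to\infty$; the proof must therefore use only \eqref{stimaBm-L4}, the \emph{uniform} estimate, when deriving the energy bound, and rely on the cancellation \eqref{scambio-m} to avoid any term requiring the non-uniform bounds. The uniformity in $T$ forces one to track explicitly how each power of $T$ enters (hence the prefactors $1+T^2$, $1+T$, $T^{1/2}$ in \eqref{stime-vm-per-tight}), rather than absorbing them into a generic constant as one may do on a fixed interval. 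This careful bookkeeping is what makes the bound usable later for the tightness argument that passes $m\to\infty$; once \eqref{stime-vm-per-tight} is in place with the stated uniformities, the remaining regularity claims on $v_m$ follow by reading off the finiteness of each norm on the right-hand side from Lemma~\ref{lemma-z2}.
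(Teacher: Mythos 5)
Your overall architecture matches the paper's: the decomposition $v_m=z_m+u_m$, the energy identity for $u_m$ with the cancellation \eqref{scambio-m}, Gronwall giving $\|u_m\|_{L^\infty(0,T;H)}^2\le\Psi e^{\Phi}$, Gagliardo--Nirenberg for the $L^{8/3}(0,T;L^4)$ norm, and the Hölder seminorm read off from the equation via $H^{1,4/3}(0,T;H^{-1})\subset C^{1/4}([0,T];H^{-1})$. However, there are two concrete problems. First, you propose to bound the nonlinearity in the $C^\beta([0,T];H^{-1})$ estimate ``using \eqref{laCM}''. That estimate carries the prefactor $\|\rho_m\|_{L^2}\sim m^{3/4}\to\infty$, so the resulting constant is \emph{not} uniform in $m$, which destroys exactly the $m$-independence of $C_4$ that the theorem asserts and that Proposition \ref{pro-tight} later needs --- and it contradicts your own (correct) remark that only the uniform estimate \eqref{stimaBm-L4} may be used. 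The paper indeed uses \eqref{stimaBm-L4}, i.e. $\|B_m(v_m,v_m)\|_{H^{-1}}\le\|v_m\|_{L^4}^2$, so that the Hölder seminorm is controlled by $\|u_m\|^2_{L^{8/3}(0,T;L^4)}+\|z_m\|^2_{L^{8/3}(0,T;L^4)}$ with $m$-free constants. (The legitimate role of \eqref{laCM} is the fixed-$m$, qualitative step $B_m(v_m,v_m)\in L^\infty(0,T;H^{-1})$, hence $du_m/dt\in L^2(0,T;H^{-1})$ and $u_m\in C([0,T];H)$ by the Lions--Temam lemma.)

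Second, your treatment of pathwise uniqueness misses the central obstruction of the paper. You assert that \eqref{laCM} makes $B_m$ locally Lipschitz on $H$ and that uniqueness then follows under {\bf (G4)}; but the standing hypothesis of the whole article is that $G$ takes values only in $\gamma(Y;H^{-g})$, so the It\^o formula for $\|V(t)\|_H^2$ (with $V=v_m-\tilde v_m$) is not available and a Gronwall argument at the $H$ level cannot even be started. The paper's uniqueness proof is run at the $H^{-g}$ level: it applies the It\^o formula to $e^{-\int_0^t\sigma(s)ds}\|V(t)\|^2_{H^{-g}}$ with the Schmalfuss weight $\sigma(s)=L_g^2+2M_{m,\gamma}(\|v_m(s)\|_H+\|\tilde v_m(s)\|_H)^{4/(1-g)}$, controls the trilinear terms through the $m$-dependent estimates \eqref{Bm1}--\eqref{Bm2} (harmless here, since $m$ is fixed), interpolates $\|V\|_{H^{(1-g)/2}}\le\|V\|_{H^{-g}}^{(1-g)/2}\|V\|_{H^{1-g}}^{(1+g)/2}$, and closes with a stopping-time/localization argument. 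This is the very reason the non-uniform bounds \eqref{Bm1}--\eqref{Bm2} are proved at all; without identifying this step your uniqueness claim does not go through as described.
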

\begin{proof}
The proof is based on our previous paper \cite{noi}. We present a proof on a fixed time interval $[0,T]$, see however Remark \ref{rem-th:esist2}.
 First we prove the existence of  a martingale solution; then the pathwise uniqueness.
Hence by a result of
\cite{IW} there exists a unique strong solution  in the probabilistic
sense. More precisely,
the existence of  a martingale solution  is obtained with the procedure
used in \cite{noi} for the 3d stochastic Navier-Stokes equation. More
regularity and pathwise uniqueness come from the techniques used in
\cite{noi} for the 2d stochastic Navier-Stokes equation.

We provide some details for the reader's convenience.

As usual, we set $v_m=z_m+u_m$ with
\[
z_m(t)=\int_0^te^{-\gamma(t-s)}e^{-(t-s)A}G(v_m(s))\,dw(s)
\]
solving equation \eqref{eq:zm}
and $u_m$ solving
\begin{equation}\label{eq:um}
\left\{
\begin{aligned}
\frac {d u_m}{dt}(t) &+Au_m(t)+\gamma u_m(t) +B_m(v_m(t),v_m(t))=f(t),\;
t\in (0,T];\\
 u_m(0)&=x.
\end{aligned}
 \right.
\end{equation}

Keeping in mind \eqref{stimaBm-L4} and \eqref{scambio-m} we have that
for any  finite $m$ the operator $B_m$ enjoys the same properties
as $B$ necessary  to prove the existence of martingale solutions for the
three dimensional stochastic Navier-Stokes equation \eqref{sns};
the additional damping term
$\gamma \|u_m(t)\|_H^2$ appearing in the energy estimate
has no effect on the apriori estimates as we have noticed  in the previous
section, see \eqref{energyest}.
Hence, assuming {\bf (G1)}-{\bf (G2)}-{\bf (G3)} we obtain the same
result as Theorem 3.6 in  \cite{noi}:
there exists a
martingale solution to equation \eqref{snsm}, and a.e.
 path of this process $v_m$  is
in $L^\infty(0,T;H)\cap C([0,T];H_{\mathrm{w}})\cap L^{\frac 83}(0,T;L^4)$.

Let us show that
a.e.  path of this solution process is in
$ C([0,T];H)$; here we need the  smoothing in order to proceed as in
the two dimensional case. We work pathwise. First,
we have $z_m\in C([0,T];H)$  as in Lemma 3.3 of
\cite{noi}.
Since  $v_m\in
L^\infty(0,T;H)$,   \eqref{laCM} implies  that $B_m(v_m(t),v_m(t))
\in L^\infty(0,T;H^{-1})$.
Moreover, $u_m \in L^2(0,T;H^1)$.
Hence
\[
\frac {d u_m}{dt}(t) =-Au_m(t)-\gamma u_m(t) -B_m(v_m(t),v_m(t))+f(t)\in L^2(0,T;H^{-1}).
\]
We conclude  by means of  a classical result, see Ch III Lemma 1.2 of \cite{temam}:
if
$u_m \in L^2(0,T;H^1)$ and $\frac {d u_m}{dt}\in
L^2(0,T;H^{-1})$  then
$u_m \in C([0,T];H)$. Therefore $v_m=z_m+u_m \in C([0,T];H)$ a.s..

Let us prove the estimate \eqref{stime-vm-per-tight}.
Taking the $H$-scalar product of  equation \eqref{eq:um}  with $u_m$ and using
the bilinearity of $B_m$ and \eqref{scambio-m}, we get
\begin{multline}\label{ener-um}
\frac 12 \frac {d}{dt}\|u_m(t)\|_{H}^2 +\|\nabla u_m(t)\|_{L^2}^2
+\gamma \|u_m(t)\|_{H}^2
\\=-\langle B_m(u_m(t)+ z_m(t),u_m(t)+z_m(t)),u_m(t)\rangle
+\langle f(t),u_m(t)\rangle.
\end{multline}
Using \eqref{stimaBm-L4}, we estimate the trilinear term
 as in the proof of Proposition 3.4 in \cite{noi}:
\begin{equation}\label{sti-B}
-\langle B_m(u_m+z_m,u_m+z_m), u_m\rangle
\le \frac 14 \|\nabla u_m\|_{L^2}^2
   +C \|u_m\|_{H}^2 \|z_m\|_{L^4}^8+  C \|z_m\|_{L^4}^4
\end{equation}
for some positive  constant $C$ independent of $m$.
Moreover
\[
|\langle f,u_m\rangle|
\le \|f\|_{H^{-1}}(\|u_m\|_{L^2}+\|\nabla u_m\|_{L^2})
\le  \gamma \|u_m\|_{H}^2+ \frac 14 \|\nabla u_m\|_{L^2}^2
+ (\frac1{4\gamma}+1) \|f\|_{H^{-1}}^2.
\]
Inserting these estimates in \eqref{ener-um} we get
\begin{multline}\label{stima x um}
 \frac {d}{dt}\|u_m(t)\|_{H}^2+ \|\nabla u_m(t)\|_{L^2}^2
\\\le
C_5\|z_m(t)\|_{L^4}^4+C_5 \|f(t)\|_{H^{-1}}^2
+C_6\|u_m(t)\|_{H}^2 \|z_m(t)\|_{L^4}^8
\end{multline}
for some constants $C_5, C_6$ independent of $T$ and $m$.
Gronwall Lemma applied to
\[
 \frac {d}{dt}\|u_m(t)\|_{H}^2\le C_6\|z_m(t)\|_{L^4}^8 \|u_m(t)\|_{H}^2
+C_5\|z_m(t)\|_{L^4}^4+C_5 \|f(t)\|_{H^{-1}}^2
\]
gives
\begin{equation}\label{stimaH}
\begin{split}
\sup_{0\le t\le T} \|u_m(t)\|_{H}^2 &\le
\|x\|_H^2 e^{\int_0^T C_6 \|z_m(r)\|^8_{L^4}dr}
\\ & \qquad +C_5
 \int_0^T e^{\int_s^T  C_6 \|z_m(r)\|^8_{L^4} dr}
\left(\|z_m(s)\|^4_{L^4}+\|f(s)\|^2_{H^{-1}}\right)ds\\
\\&\le
\Psi(z_m,T)e^{\Phi(z_m,T)}
\end{split}
\end{equation}
where $\Psi$ and $\Phi$ are defined in \eqref{lapsi} and \eqref{laphi}, respectively.
Integrating in time \eqref{stima x um} we get
\begin{equation}\label{stimaH1}
\begin{split}
\int_0^T \|\nabla u_m(t)\|_{L^2}^2 dt
&\le
\Psi(z_m,T)+\Phi(z_m,T)\|u_m\|_{L^\infty(0,T;H)}^2
\\& \le
\Psi(z_m,T)+\Phi(z_m,T)\Psi(z_m,T)e^{\Phi(z_m,T)}.
\end{split}
\end{equation}

Now, we recall the continuous embedding
$H^{1,\frac 43}(0,T;H^{-1})\subset C^{\frac 14}([0,T];H^{-1})$; using
\eqref{stimaBm-L4} we proceed as
 in Proposition 3.4 of \cite{noi}
to get
\begin{equation}\label{stimaC14}
\begin{split}
\|u_m\|&_{C^{\frac 14}([0,T];H^{-1})}\\
&\le C \|u_m\|_{H^{1,\frac 43}(0,T;H^{-1})}\\
&=C (\|u_m\|_{L^{\frac 43}(0,T;H^{-1})}
  +\|-A u_m-\gamma u_m -B_m(v_m,v_m)+f\|_{L^{\frac 43}(0,T;H^{-1})})
\\
&\le
C \Big(\|u_m\|_{L^{\frac 43}(0,T;H)}
  +\|u_m\|_{L^{\frac 43}(0,T;H^1)}+\gamma \|u_m\|_{L^{\frac 43}(0,T;H)}
\\&\qquad
    + \|B_m(u_m+z_m,u_m+z_m)\|_{L^{\frac 43}(0,T;H^{-1})} +\|f\|_{L^{\frac 43}(0,T;H^{-1})}\Big)
\\
&\le C\Big((1+\gamma)T^{\frac 34}\|u_m\|_{L^\infty(0,T;H)}
 + T^{\frac 14}\|u_m\|_{L^2(0,T;H^1)}
 + \|u_m\|^2_{L^{\frac 83}(0,T;L^4)}
\\&\qquad
  + \|z_m\|^2_{L^{\frac 83}(0,T;L^4)}+T^{\frac 14} \|f\|_{L^2(0,T;H^{-1})} \Big).
\end{split}
\end{equation}
The  Gagliardo-Nirenberg inequality
\[
\|u_m(t)\|_{L^4}\le C
\|u_m(t)\|_{L^2}^{\frac 14} \|\nabla u_m(t)\|_{L^2}^{\frac 34}
\]
gives
\begin{equation}\label{stimaGN}
\begin{split}
\|u_m\|_{L^{\frac 83}(0,T;L^4)}^2
&\le
C \|u_m\|_{L^\infty(0,T;L^2)}^{\frac 12} \|\nabla
u_m\|_{L^2(0,T;L^2)}^{\frac 32}
\\
&
\le
C(\|u_m\|_{L^\infty(0,T;H)}^2+ \|\nabla u_m\|^2_{L^2(0,T;H)} ).
\end{split}\end{equation}

Summing up, we can bound $\n{ u_m}_T$.
First,
\begin{eqnarray*}
\n{ u_m}_T
&\le&
C\Bigl((\|u_m\|_{L^\infty(0,T;H)} + \|u_m\|_{L^2(0,T;H^1)}
+ \|u_m\|_{L^{\frac 83}(0,T;L^4)}\\
&+&(1+T^{\frac 14-\beta}) \|u_m\|_{C^{\frac 14}([0,T];H^{-1})}
)\Bigr);
\end{eqnarray*}
then by \eqref{stimaH}, \eqref{stimaH1}, \eqref{stimaC14} and \eqref{stimaGN} we get
\[\begin{split}
\n{ u_m}_T ^2
&\le
C\Big((1+T^2)\|u_m\|^2_{L^\infty(0,T;H)}
 +(1+T)\|\nabla u_m\|^2_{L^2(0,T;L^2)} +\|u_m\|^4_{L^\infty(0,T;H)}
\\ &
\qquad+\|\nabla u_m\|^4_{L^2(0,T;L^2)}
 + \|z_m\|^4_{L^{\frac 83}(0,T;L^4)}+T^{\frac 12} \|f\|^2_{L^2(0,T;H^{-1})}
\Big)\\
&\le
C\Big( (1+T^2)[1+\Psi(z_m,T) e^{2\Phi(z_m,T)}(1+\Phi(z_m,T)^2)]
\\&
\qquad +\|z_m\|^4_{L^{\frac 83}(0,T;L^4)}+T^{\frac 12} \|f\|^2_{L^2(0,T;H^{-1})}
\Big)
\end{split}
\]
for some positive constant $C$ independent of $T$ and $m$.

For the process $z_m$ we have
\[
\n{ z_m}_T\le C \left( (1+T^{\frac 12})\|z_m\|_{C^\beta([0,T];H^\delta)}+\|z_m\|_{L^{\frac 83}(0,T;L^4)}
\right)
\]
for some positive constant $C$ independent of $T$ and $m$.

We finally
 get \eqref{stime-vm-per-tight} for $v_m=u_m+z_m$.

As far as the pathwise uniqueness is concerned,
we proceed in a similar way we did in  Theorem 4.1 of \cite{noi}
 for the two dimensional stochastic Navier-Stokes equation.
Let $v_m$ and $\tilde v_m$ be two
processes solving
\eqref{snsm} on the same stochastic basis
with the same Wiener process,  initial velocity and deterministic force $f$.
Set $V=v_m-\tilde v_m$; this difference satisfies
\begin{multline*}
dV(t) +[AV(t) +\gamma V(t)
+B_m(v_m(t),v_m(t))-B_m(\tilde v_m(t),\tilde v_m(t))]\ dt
\\=[G(v_m(t))-G(\tilde v_m(t))]\,dw(t)
\end{multline*}
with $V(0)=0$; the equation  is equivalent to
\begin{eqnarray*}
dV(t) &+&[AV(t) +\gamma V(t) + B_m(V(t),v_m(t))+B_m(\tilde v_m(t),V(t))]\ dt
\\&=&[G(v_m(t))-G(\tilde v_m(t))]\,dw(t).
\end{eqnarray*}

We will apply the  It\^o formula for the process
$e^{-\int_0^t \sigma(s)ds}\|V(t)\|^2_{H^{-g}}$, $t \in [0,T]$, 
by choosing $\sigma$ as it has been done in paper \cite{S} by B. Schmalfuss:
\[
\sigma(s)=L_g^2+2M_{m,\gamma} \left(\|v_m(s)\|_{H}
+\|\tilde v_m(s)\|_{H}\right)^{\frac4{1-g}}
\]
with $L_g$ the Lipschitz constant given in {\bf (G4)}
and $M_{m,\gamma}$ the constant appearing later on in \eqref{cxunic}.
We have  $\sigma\in L^1(0,T)$ $\mathbb P$-a.s.
since $v_m, \tilde v_m\in C([0,T;H)$ $\mathbb P$-a.s..\\
First
\[
d \left(e^{-\int_0^t \sigma(s)ds}\|V(t)\|^2_{H^{-g}}\right)=-  \sigma(t) e^{-\int_0^t \sigma(s)ds}\|V(t)\|^2_{H^{-g}} dt +
e^{-\int_0^t \sigma(s)ds}d\|V(t)\|^2_{H^{-g}}
\]
and the latter differential is well defined and given by
\[\begin{split}
\frac 12 d\|V(t)\|^2_{H^{-g}}=&-\|\nabla V(t)\|_{H^{-g}}^2dt-\gamma\|V(t)\|_{H^{-g}}^2dt
\\&
-\langle J^{-1-g}[B_m(V(t),v_m(t))+B_m(\tilde v_m(t),V(t))],J^{1-g}V(t)\rangle dt
\\
&+\langle J^{-g}[G(v_m(t))-G(\tilde v_m(t))]\,dw(t), J^{-g}V(t)\rangle
\\&
+\frac 12 \|G(v_m(t))-G(\tilde v_m(t))\|_{\gamma(Y;H^{-g})}^2 dt.
\end{split}
\]

We deal with the trilinear terms:
\begin{equation}\label{cxunic}
    \begin{split}
|\langle J^{-1-g}&[B_m(V,v_m)+B_m(\tilde v_m,V)],J^{1-g}V\rangle|
\\
&\le (\|B_m(V,v_m)\|_{H^{-1-g}}+\|B_m(\tilde v_m,V)\|_{H^{-1-g}})\|V\|_{H^{1-g}}
\\
&\le C \|\rho_m\|_{L^{\frac 6{4+g}}} (\|v_m\|_H+\|\tilde v_m\|_H)\|V\|_{H^{\frac{1-g}2}}\|V\|_{H^{1-g}}
\;\text{ by }  \eqref{Bm1}-\eqref{Bm2}
\\
& \le C  \|\rho_m\|_{L^{\frac 6{4+g}}} (\|v_m\|_H+\|\tilde v_m\|_H)
\|V\|_{H^{-g}}^{\frac{1-g}2}\|V\|_{H^{1-g}}^{\frac{3+g}2}
\;\text{ by interpolation}
\\
&\le \tfrac {\min(1,\gamma)}2  \|V\|_{H^{1-g}}^2
 +M_{m,\gamma} (\|v_m\|_H+\|\tilde v_m\|_H)^{\frac4{1-g}} \|V\|^2_{H^{-g}} \;
\text{ by Young inequality}
 \\
&=   \tfrac {\min(1,\gamma)}2 \|\nabla V\|_{H^{-g}}^2+ \tfrac {\min(1,\gamma)}2\|V\|_{H^{-g}}^2
 +M_{m,\gamma}  (\|v_m\|_H+\|\tilde v_m\|_H)^{\frac4{1-g}} \|V\|^2_{H^{-g}}.
\end{split}
\end{equation}

Therefore, using {\bf (G4)}   we get
\begin{multline}\label{stima:Sch}
d \left(e^{-\int_0^t \sigma(s)ds}\|V(t)\|^2_{H^{-g}}\right)
\\\le
e^{-\int_0^t \sigma(s)ds}\langle J^{-g}[G(v_m(t))-G(\tilde v_m(t))]\,dw(t), J^{-g}V(t)\rangle.
\end{multline}
The RHS is  a local martingale; indeed if we define the stopping time
\[
\tau_N=T\wedge \inf\{t \in [0,T]:\|V(t)\|_{H^{-g}}>N\}
\]
and
\[
M_N(t)=\int_0^{t\wedge \tau_N} e^{-\int_0^r \sigma(s)ds}
\langle J^{-g}V(r), J^{-g}[G(v_m(r))-G(\tilde v_m(r))]\,dw(r)\rangle
\]
then
\begin{align*}
\mathbb E [M_N(t)^2]&\le \mathbb E \int_0^{t\wedge \tau_N}
  e^{-2\int_0^r \sigma(s)ds}\|V(r)\|_{H^{-g}}^2\|G(v_m(r))-G(\tilde v_m(r))\|^2_{\gamma(Y;H^{-g})}dr\\
&\le L_g^2 \mathbb E \int_0^{t\wedge \tau_N}\|V(r)\|_{H^{-g}}^4 dr \le L_g^2 N^4 t .
\end{align*}
Hence, $M_N$ is a square integable martingale;
in particular $\mathbb E [M_N(t)]=0$ for any $t$.

Therefore, by integrating  \eqref{stima:Sch} over  $[0,t\wedge\tau_N]$
and taking  the expectation we get
\[
\mathbb E \left[e^{-\int_0^{t\wedge \tau_N} \sigma(s)ds}\|V(t\wedge \tau_N)\|^2_{H^{-g}}\right]
\le 0.
\]
So
\[
e^{-\int_0^{t\wedge \tau_N} \sigma(s)ds}\|V(t\wedge \tau_N)\|^2_{H^{-g}}=0 \qquad \mathbb P-a.s.
\]
Since $\displaystyle\lim_{N\to \infty} \tau_N=T\ \mathbb P$-a.s., we get in the limit that for any $t\in [0,T]$
\[
e^{-\int_0^{t} \sigma(s)ds}\|V(t)\|^2_{H^{-g}}=0 \qquad \mathbb P-a.s.
\]
Thus, if we take a  sequence $\{t_k\}_{k=1}^\infty$ which is dense in $[0,T]$ we have
\[
\mathbb P\left(\|V(t_k)\|_{H^{-g}}=0 \; \text{for all } k \in \mathbb N\right)=1.
\]
Since a.e. path of the process $V$ belongs to $ C([0,T];H)\subset C([0,T]; H^{-g})$, we
get
\[
\mathbb P\left( \|V(t)\|_{H^{-g}}=0 \; \text{for all } t\right) =1.
\]
This gives pathwise uniqueness.
\end{proof}

Now, for each $m\in \mathbb N$  we define the Markov semigroup
$\{P^{(m)}_t\}_{t\ge 0}$ associated to \eqref{snsm} as
\[
P^{(m)}_t\phi(x)= {\mathbb E}[\phi( v_m(t;x))]
\]
for any  bounded Borel function $\phi:H\to \mathbb R$.

Let us point out that the  result of Theorem \ref{teo-snsm} holds also  for general
initial distribution, see
Corollary 22 in \cite{On2005} (based on the finite dimensional
case considered in \cite{IW}).
Therefore, we have the following
\begin{corollary}\label{cor-snsm}
Let $d=3$. If $\mu$ is a probability measure on the Borelian subsets of $H$,
$f \in L^p(0,T;H^{-1})$ for some $p>2$ and assumptions
{\bf (G1)}-{\bf (G2)}-{\bf (G3)}-{\bf (G4)} are satisfied,
then   there exists a unique  solution $v_m$ of equation \eqref{snsm}
on the time interval $[0,T]$ with initial velocity of law $\mu$ and
the same properties given in  Theorem \ref{teo-snsm} hold.
\end{corollary}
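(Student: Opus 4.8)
The plan is to deduce the statement from Theorem \ref{teo-snsm} by an application of the infinite dimensional Yamada--Watanabe principle, exactly as indicated by the reference to Corollary 22 of \cite{On2005} preceding the statement. Theorem \ref{teo-snsm} already provides, for every \emph{deterministic} initial datum $x\in H$, both the existence of a martingale solution of equation \eqref{snsm} and pathwise uniqueness; these are precisely the two ingredients that the abstract transfer result requires. That result (which lifts to the infinite dimensional setting the finite dimensional statement of \cite{IW}) asserts that weak existence together with pathwise uniqueness yield a unique strong solution for an \emph{arbitrary} initial law, so that no new analysis of equation \eqref{snsm} is needed: the work consists in checking that the present solution concept fits the framework of \cite{On2005} and then recording that the regularity and the bound \eqref{stime-vm-per-tight} survive the randomisation of the initial datum.

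First I would observe that pathwise uniqueness, as established in the proof of Theorem \ref{teo-snsm}, is insensitive to the law of the initial velocity. The argument based on the Schmalfuss weight $e^{-\int_0^t\sigma(s)\,ds}$ and the It\^o formula for $\|V(t)\|_{H^{-g}}^2$ uses only that the two competing solutions $v_m,\tilde v_m$ share the same initial datum, so that $V(0)=0$, and are driven by the same Wiener process; it never uses that this datum is non-random. Hence pathwise uniqueness holds verbatim for any initial distribution $\mu$, and the hypotheses of the transfer result are met once one notes that the martingale formulation \eqref{sol-path}, the progressive measurability, and the separability of the state space $H$ place us within its scope even though the noise takes values in $\gamma(Y;H^{-g})$ and the It\^o integral lives in the Banach space $H^{-g}$.

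Next I would construct the solution associated with $\mu$. The strong existence part of the transfer result produces a Borel measurable solution map $\mathcal F$ such that $v_m(\cdot;x)=\mathcal F(x,w)$ holds $\mathbb P$-a.s.\ for each $x\in H$. Given $\mu$, I would enlarge the probability space to carry an $H$-valued random variable $\xi$ of law $\mu$, independent of the cylindrical Wiener process $w$, and set $v_m:=\mathcal F(\xi,w)$. Conditioning on $\{\xi=x\}$, invoking Theorem \ref{teo-snsm} for each such $x$, and applying Fubini then shows that $v_m$ solves \eqref{snsm}, that $v_m(0)$ has law $\mu$, and that this solution is unique up to indistinguishability. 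The regularity and the a priori estimate \eqref{stime-vm-per-tight} transfer for free: for every deterministic $x\in H$ they hold $\mathbb P$-a.s., with a right-hand side depending on $x$ only through $\|x\|_H$ (entering via $\Psi(z_m,T)$) and through the process $z_m$, so disintegrating with respect to $\mu$ yields the same almost sure regularity and the same pathwise bound for $v_m$, now with the random value $\xi$ in place of $x$.

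The main obstacle is therefore not analytic but bookkeeping in nature: one must verify carefully that the low-regularity martingale-solution concept used throughout — weakly continuous $H$-valued paths, multiplicative noise valued in $\gamma(Y;H^{-g})$, and a Banach space It\^o integral — really is an instance of the abstract setting of \cite{On2005}, rather than merely resembling it. Once this identification is secured, the passage from a deterministic initial velocity to an arbitrary initial distribution $\mu$ is automatic, and the properties of Theorem \ref{teo-snsm} persist by the disintegration argument just described.
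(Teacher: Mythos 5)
Your proposal is correct and follows the same route as the paper, which likewise obtains the corollary by invoking the infinite dimensional Yamada--Watanabe transfer result (Corollary 22 of \cite{On2005}, built on \cite{IW}) to pass from the deterministic initial data of Theorem \ref{teo-snsm} to a general initial law $\mu$. The paper gives essentially no further argument, so your elaboration of the disintegration step and of why pathwise uniqueness and the bound \eqref{stime-vm-per-tight} survive the randomisation of the initial datum is a faithful, more detailed account of the same proof.
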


As in the previous section, by means of
Maslowski and Seidler result of Theorem \ref{MSprop}, we can prove the existence of at least one
invariant measure for the smoothed equation \eqref{snsm}.
\begin{theorem}\label{th:mis-inv-3d-m}
Let $d=3$. If $f \in H^{-1}$ and assumptions
{\bf (G1)}-{\bf (G2)}-{\bf (G3)}-{\bf (G4)} are satisfied,
then there exists at least one invariant measure $\mu_m$ for equation
\eqref{snsm} and $\mu_m(H)=1$.
\end{theorem}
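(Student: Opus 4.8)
The plan is to apply the Maslowski--Seidler criterion of Theorem~\ref{MSprop} to the Markov semigroup $\{P^{(m)}_t\}_{t\ge 0}$, which is well defined for each fixed $m$ thanks to the existence and pathwise uniqueness established in Theorem~\ref{teo-snsm}. Since $m$ is kept fixed throughout, the whole argument runs parallel to the two-dimensional case of Theorem~\ref{th:mis-inv-2d}, with the bilinear map $B$ replaced by the smoothed map $B_m$. The crucial structural point is that $B_m$ inherits from \eqref{scambio-m} the antisymmetry $\langle B_m(u,v),v\rangle=0$ and from \eqref{stimaBm-L4} the same $L^4$ bound $\|B_m(u,v)\|_{H^{-1}}\le\|u\|_{L^4}\|v\|_{L^4}$ enjoyed by $B$, so that the energy identity can be closed just as in $\mathbb R^2$. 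Thus there are exactly two things to verify: the sequentially weakly Feller property (condition~i)) and the boundedness in probability (condition~ii)).

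For the sequentially weakly Feller property I would repeat the scheme of Section~\ref{sect-SwF}. Given $x_k\rightharpoonup x$ in $H$, I write the solutions as $v_{m,k}=z_{m,k}+u_{m,k}$ and derive, from the a priori bounds already contained in the proof of Theorem~\ref{teo-snsm} together with $\sup_k\|x_k\|_H<\infty$, uniform-in-$k$ bounds in probability for $\|v_{m,k}\|_{L^\infty(0,T;H)}$, $\|v_{m,k}\|_{L^2(0,T;H^\delta)}$, $\|v_{m,k}\|_{L^{\frac83}(0,T;L^4)}$ and $\|v_{m,k}\|_{C^\beta([0,T];H^{-1})}$. By Lemma~\ref{lemmaZT} these yield tightness of the laws in $Z_T$; Jakubowski's version of the Skorohod theorem then furnishes a.s.\ convergent copies whose limit is identified, by uniqueness, with the solution started at $x$. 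Since convergence in $C([0,T];H_{\mathrm w})$ gives weak convergence in $H$ at each fixed time, for any bounded sequentially weakly continuous $\phi$ the LDC theorem produces $P^{(m)}_t\phi(x_k)\to P^{(m)}_t\phi(x)$.

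For the boundedness in probability I would mimic the corresponding part of Section~3, introducing the modified Ornstein--Uhlenbeck process $\zeta^\alpha$ of \eqref{zalpha}. Lemma~\ref{lemma-z2} is dimension independent, so it supplies all the needed moments of $\|\zeta^\alpha(t)\|_H$ and $\|\zeta^\alpha(t)\|_{L^4}$, uniformly in $t$, with constants tending to $0$ as $\alpha\to\infty$ by \eqref{lim-a}. I then study $u_m^\alpha=v_m-\zeta^\alpha$, take the $H$-scalar product of its equation with $u_m^\alpha$, and use \eqref{scambio-m} and \eqref{stimaBm-L4} to split and bound the trilinear term. This is where the only genuinely three-dimensional feature appears: the Gagliardo--Nirenberg inequality now produces the eighth power $\|\zeta^\alpha\|_{L^4}^8$ rather than the fourth power of the planar case, so the Da Prato--G\c{a}tarek estimate on $\tfrac{d}{dt}\ln(\|u_m^\alpha\|_H^2\vee R)$ involves $\mathbb E\|\zeta^\alpha\|_{L^4}^8$. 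The main obstacle is therefore to control this higher moment; but this is precisely the case $q=8$ of \eqref{stima-zeta-alpha}, whose constant vanishes as $\alpha\to\infty$. Choosing $\alpha$ and $R$ large enough makes $\frac1T\int_0^T\mathbb P(\|u_m^\alpha(t)\|_H>R)\,dt$ arbitrarily small uniformly in $T$, and combining this with the Chebyshev bound on $\zeta^\alpha$ yields condition~ii) for $v_m=u_m^\alpha+\zeta^\alpha$. Theorem~\ref{MSprop} then delivers the invariant measure $\mu_m$, with $\mu_m(H)=1$ since the solution takes values in $H$.
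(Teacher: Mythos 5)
Your proposal is correct and follows essentially the same route as the paper: the paper's proof of Theorem \ref{th:mis-inv-3d-m} likewise reduces to the two-dimensional argument of Theorem \ref{th:mis-inv-2d}, verifying the sequentially weakly Feller property and the boundedness in probability via the decomposition $v_m=\zeta^\alpha_m+u^\alpha_m$, the dimension-independent Lemma \ref{lemma-z2}, and the Da Prato--G\c{a}tarek logarithmic estimate. You have also correctly isolated the one genuinely three-dimensional point, namely that the trilinear estimate now produces $\|\zeta^\alpha_m\|_{L^4}^8$ so that the case $q=8$ of \eqref{stima-zeta-alpha} (i.e.\ the constant $C_{\alpha,8,4}\to 0$ as $\alpha\to\infty$) is what closes the argument, exactly as in the paper.
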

\begin{proof}
The proof is the same as that of Theorem \ref{th:mis-inv-2d}. Indeed,
both the sequentially weakly Feller property
and the boundedness in probability result
are based on the results of
\cite{noi}, which hold for the stochastic damped Navier-Stokes equation \eqref{sns} as
well as for the smoothed version \eqref{snsm}.
In particular the estimates for the boundedness in probability
of the sequence of $v_m=\zeta^\alpha_m+u^\alpha_m$ come from those
for $\zeta^\alpha_m$ and $u^\alpha_m$, where
\[
\zeta_m^\alpha(t)=\int_0^t e^{-(\gamma+\alpha)(t-s)}e^{-(t-s)A}G(v_m(s))\,dw(s), \;\; t\in [0,T],
\]
and
\begin{equation}\label{uma}
\left\{
\begin{aligned}
\frac {d u_m^\alpha}{dt}(t) &+Au_m^\alpha((t) +\gamma u_m^\alpha((t)
+ B_m(v_m(t),v_m(t))
=
\alpha\zeta^\alpha_m(t)+f,\\
 u_m^\alpha(0)&=0.
\end{aligned}
\right.
\end{equation}
In particular,
Lemma \ref{lemma-z2}  gives
\[
\sup_{t\ge 0} \mathbb E \|\zeta^\alpha_m(t)\|_H^2 \le  C_{\alpha,2},
\qquad\qquad
\sup_{t\ge 0} \mathbb E \|\zeta^\alpha_m(t)\|_{H^{0,4}_\sol}^q \le  C_{\alpha,q,4}
\]
(here we need $q=2$ and $q=4$ and by the way we notice that the
estimates are uniform in $m$).
As far as the estimate  for $u^\alpha_m$ is concerned, we proceed  as
follows.
By taking the $H$-scalar product of equation \eqref{uma} with $u^\alpha_m(t)$ we
get
\[
\begin{split}
\frac 12 \frac {d}{dt}\|u^\alpha_m(t)\|_{H}^2
 &+ \|\nabla u^\alpha_m(t)\|^2_{L^2}+\gamma \|u^\alpha_m(t)\|_{H}^2
\\
&=-\langle
B_m(u^\alpha_m(t)+\zeta^\alpha_m(t)m,u^\alpha_m(t)+\zeta^\alpha_m(t)), u^\alpha_m(t)\rangle
\\
&\qquad +\alpha \langle \zeta^\alpha_m(t), u^\alpha_m(t)\rangle
+\langle f, u^\alpha_m(t)\rangle.
\end{split}
\]
We estimate the trilinear
term as in
 \eqref{sti-B} (but with different constants)
and get
\begin{multline*}
\frac {d}{dt}\|u^\alpha_m(t)\|_{H}^2\le -\gamma \|u^\alpha_m(t)\|_{H}^2
+ C_7\|\zeta^\alpha_m(t)\|_{L^4}^8
\|u^\alpha_m(t)\|_{H}^2\\
+ \alpha^2 C_7 \|\zeta^\alpha_m(t)\|_{H}^2
+ C_7 \|\zeta^\alpha_m(t)\|_{L^4}^4+C_7 \|f\|_{H^{-1}}^2
\end{multline*}
providing, as in the proof of Theorem \ref{pro:bd-u}, that
\[\begin{split}
\gamma\frac 1T  \int_0^T\mathbb P &\left(\|u^\alpha_m(t)\|_{H}^2>R\right) dt
\le
C_7 \sup_{t\ge0} \mathbb E [\|\zeta^\alpha_m(t)\|_{L^4}^8]
\\&
+
\frac 1R\ C_7\left(\alpha^2 \sup_{t\ge 0}\mathbb E [\|\zeta^\alpha_m(t)\|_{H}^2]
+\sup_{t\ge 0}\mathbb E[\|\zeta^\alpha_m(t)\|_{L^4}^4]+
\|f\|_{H^{-1}}^2\right)
\\&
\le C_7\left( C_{\alpha,8,4}
   +\frac {\alpha^2 C_{\alpha,2}+C_{\alpha,4,4}+\|f\|^2_{H^{-1}}}R \right).
\end{split}\]
From here we conclude as in the two dimensional case considered in the
previous section, since  the RHS above can be as small as we
want for a suitable choice of $\alpha$ and $R$.
\end{proof}

\subsection{Existence of stationary martingale solutions}
Now we go back to the original equation \eqref{sns}.
For each $m\in \mathbb N$,  we fix an invariant measure $\mu_m$ for
the smoothed equation
\eqref{snsm} as given by Theorem \ref{th:mis-inv-3d-m}.
Therefore we denote by $v_m$   the stationary
solution of \eqref{snsm} whose
marginal at any fixed time is $\mu_m$; this is the solution given in
Corollary \ref{cor-snsm} with initial
velocity of  law $\mu_m$, given a   probability space  rich enough to
support a random variable with law $\mu_m$.
In the limit as $m \to \infty$, we shall get a stationary solution of equation \eqref{sns}.

First, we prove a tightness result.
\begin{proposition} \label{pro-tight}
Let $d=3$,  $f \in H^{-1}$ and assume
{\bf (G1)}-{\bf (G2)}-{\bf (G3)}-{\bf (G4)}.
For any $m \in \mathbb N$, let $\mu_m$ be an invariant measure
for equation \eqref{snsm} as given in Theorem  \ref{th:mis-inv-3d-m}.
Then
the sequence of the laws of the stationary processes $v_m$,  solving equation \eqref{snsm} with
initial velocity of law  $\mu_m$,  is tight in the space
\[
Z=C(\mathbb R_+;H_{\mathrm{w}})\cap L^2_{\mathrm{loc}}(\mathbb R_+;H_{\mathrm{loc}})
\cap
\Big(L^{\frac 83}_{\mathrm{loc}}(\mathbb R_+;L^4)\Big)_{\mathrm{w}}
\cap C(\mathbb R_+;U^\prime)
\]
with the topology $\mathcal T$  given by the supremum of the corresponding topologies.
\end{proposition}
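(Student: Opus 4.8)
The plan is to deduce tightness from the compactness criterion of Lemma~\ref{lemma-Zcomp}. It suffices to find, for every $\eta>0$, a sequence $\alpha=\{\alpha_N\}_{N\in\mathbb N}$ of positive numbers with $\sup_m\mathbb P\big(v_m\notin K(\alpha)\big)\le\eta$, since $K(\alpha)$ is then a compact subset of $Z$ of mass at least $1-\eta$ under the law of each $v_m$. As $v_m\notin K(\alpha)$ exactly when $\n{v_m}_N>\alpha_N$ for some $N$, the union bound
\[
\mathbb P\big(v_m\notin K(\alpha)\big)\le\sum_{N=1}^\infty\mathbb P\big(\n{v_m}_N>\alpha_N\big)
\]
reduces everything to showing that, for each fixed $N$, the real random variables $\n{v_m}_N$ are bounded in probability uniformly in $m$, i.e. $\lim_{R\to\infty}\sup_m\mathbb P(\n{v_m}_N>R)=0$; one then selects $\alpha_N$ so large that $\sup_m\mathbb P(\n{v_m}_N>\alpha_N)\le\eta\,2^{-N}$ and sums the geometric series.

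To obtain this uniform boundedness in probability I would invoke the pathwise bound \eqref{stime-vm-per-tight} of Theorem~\ref{teo-snsm} on the interval $[0,N]$ --- valid also for the random initial datum of law $\mu_m$ by Corollary~\ref{cor-snsm}, with $\|x\|_H$ read as $\|v_m(0)\|_H$. Its right-hand side is a fixed continuous function $F_N$ of the random quantities $\|v_m(0)\|_H$, $\|z_m\|_{L^4(0,N;L^4)}$, $\|z_m\|_{L^8(0,N;L^4)}$, $\|z_m\|_{L^{8/3}(0,N;L^4)}$, $\|z_m\|_{C^\beta([0,N];H^\delta)}$ and of the deterministic $\|f\|_{L^2(0,N;H^{-1})}$. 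The elementary but decisive observation is that a continuous function of finitely many random variables, each bounded in probability uniformly in $m$, is again bounded in probability uniformly in $m$: if the arguments lie in a fixed ball with probability $\ge 1-\eta$ for all $m$, then so does $F_N$ of them, by continuity on that ball. In particular the exponential factor $e^{2\Phi(z_m,N)}$, $\Phi(z_m,N)=C_6\|z_m\|^8_{L^8(0,N;L^4)}$, which would destroy any hope of bounding $\mathbb E\,\n{v_m}_N$, is completely harmless at this level. It therefore remains only to check the uniform boundedness in probability of the listed inputs.

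These split into two types. For the initial data: all the constants in Lemma~\ref{lemma-z2} and in the proof of Theorem~\ref{th:mis-inv-3d-m} are independent of $m$, so the boundedness in probability \eqref{th:bdinprob} holds with a single rate $R\mapsto\varepsilon(R)$, $\varepsilon(R)\to0$, valid for all $m$; since each $\mu_m$ is obtained from the time averages of these laws by the Krylov--Bogoliubov construction and balls are weakly closed, this transfers to $\sup_m\mu_m(\{\|x\|_H>R\})\le\varepsilon(R)$, i.e. $\|v_m(0)\|_H$ is bounded in probability uniformly in $m$. For the stochastic convolutions: the moment estimates for $z_m$ (Lemma~\ref{lemma-z2} here, combined with the arguments of \cite{noi}) involve $G$ only through the constants $K_{g,2},K_{g,4}$ and hence depend neither on the process inside $G(\cdot)$ nor on $m$; thus $\sup_m\mathbb E\|z_m\|_{L^p(0,N;L^4)}^q<\infty$ for the relevant $p,q$ and $\sup_m\mathbb E\|z_m\|_{C^\beta([0,N];H^\delta)}<\infty$, and Chebyshev yields the required uniform boundedness in probability.

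Combining the continuity principle with these two ingredients gives the uniform boundedness in probability of $\n{v_m}_N$ for every $N$, hence the sequence $\alpha$ and the compact set $K(\alpha)$, and tightness follows. The genuine obstacle --- and the reason the argument must be run in probability rather than in expectation --- is precisely the exponential term $e^{2\Phi(z_m,N)}$ in \eqref{stime-vm-per-tight}: it forbids a uniform $L^1$ bound on $\n{v_m}_N$, so everything hinges on the fact that continuity of $F_N$, together with the uniform tightness of the finitely many inputs $z_m$ and of the initial laws $\mu_m$, is already enough to control $\n{v_m}_N$ in probability.
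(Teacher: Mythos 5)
Your proposal is correct and follows essentially the same route as the paper: a union bound over $N$ reducing tightness to uniform-in-$m$ boundedness in probability of $\n{v_m}_N$, the pathwise estimate \eqref{stime-vm-per-tight}, uniform moment bounds on $z_m$ (constants depending only on $K_{g,2},K_{g,4}$), and control of the exponential factor $e^{2\Phi(z_m,N)}$ by restricting to a high-probability event rather than taking expectations. Your ``continuous function of inputs bounded in probability'' principle is exactly the abstract form of the paper's good-set $S_{z_m,N,b}=\{\|z_m\|^8_{L^8(0,N;L^4)}\le b,\ \|v_m(0)\|^2_H\le b\}$ argument, and you are in fact slightly more explicit than the paper about why $\sup_m\mu_m(\|x\|_H>R)$ is small.
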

\begin{proof}
We have to prove that for every $\eps>0$ we can find a compact subset
$ K_\eps$ of $Z$
such that
\[
\sup_m \mathbb P\left( v_m \notin K_\eps\right) < \eps.
\]

Let us fix $\eps>0$. Choose
$ \beta \in (0,\frac 14]$ and $\delta\in (0,1]$
 such that $\beta+\frac \delta 2<\frac {1-g}2$.
We shall take a  set of the form
$K(\alpha^{(\eps)})=\{v \in Z:\n{ v}_N \le \alpha_N^{(\eps)} \text{
  for any } N \in \mathbb N\}$ (which is compact
according to Lemma \ref{lemma-Zcomp}) with
\[
\n{ v}_N=\|v\|_{L^\infty(0,N;H)}+\|v\|_{L^2(0,N;H^\delta)}+
 \|v\|_{L^{\frac 83}(0,N;L^4)}+\|v\|_{C^\beta([0,N];H^{-1})}.
\]

As in \cite{BO2011}, it is sufficient to find a
sequence $\alpha^{(\eps)}=(\alpha^{(\eps)}_1,\alpha^{(\eps)}_2,\ldots)$ such that
\begin{equation}\label{succ}
\sup_m \mathbb P\left( \n{ v_m}_N>\alpha_N^{(\eps)}\right) < \frac 1{2^N}\eps.
\end{equation}
Indeed, this implies tightness, since
\begin{eqnarray*}
\sup_m \mathbb P\left( v_m \notin K(\alpha^{(\eps)})\right) &=&
\sup_m \mathbb P\left( \bigcup_{N=1}^\infty \{\n{ v_m}_N>\alpha_N^{(\eps)}\}\right)
\\
&\le& \sum_{N=1}^\infty \sup_m \mathbb P\left( \n{ v_m}_N>\alpha_N^{(\eps)}\right)
< \eps.
\end{eqnarray*}

Bearing in mind \eqref{stime-vm-per-tight}
we will estimate $\mathbb P\left( \n{ v_m}_N>\alpha_N^{(\eps)}\right) $.
From Lemma \ref{lemma-z2} we infer that there exists a constant
$C_{8}$ depending on $p,g,\gamma,K_{g,4}$
such that for all  $m$ and $T$
\begin{equation}\label{z-in-Lp}
\mathbb E \|z_m\|_{L^p(0,T;L^4)}^p\le C_8(1+T).
\end{equation}
From the proof of Lemma 3.3 of \cite{noi} we infer that there exists a constant
$C_9$ depending on $p, \beta,\delta, K_{g,2}$
such that for all  $m$ and $T$
\begin{equation}\label{z-in-C}
\mathbb E \|z_m \|_{C^\beta([0,T];H^\delta)}
\le C_9(1+T^{\frac{1-g}2-\beta-\frac \delta 2}).
\end{equation}
Therefore, by the Chebyshev inequality for any  $b>0$  and $m \in \mathbb N$
we have
\begin{equation}\label{Cheb-z-in-Lp}
\mathbb P\left( \|z_m\|_{L^p(0,T;L^4)}^p>b\right) \le \frac {C_8(1+T)}b
\end{equation}
\begin{equation}
\mathbb P\left( \|z_m\|_{C^\beta([0,T];H^\delta)}>b\right) \le
\frac {C_9(1+T^{\frac{1-g}2-\beta-\frac \delta 2})}b .
\end{equation}

Bearing in mind the definition of $\Phi(z_m,N)$ given in
\eqref{laphi}, the estimate \eqref{z-in-Lp} provides that
 $\mathbb P\left( \Phi(z_m,N)<\infty\right)
\equiv \mathbb P\{C_6 \|z_m\|^8_{L^8(0,N;L^4)}<\infty\}=1$
and by \eqref{Cheb-z-in-Lp} we have that for any
$\eps,N>0$ there exists $M=M(\eps,N)>0$ such that
\[
\mathbb P\left( \Phi(z_m,N)>M\right) < \frac\eps 2 \frac 1{2^N}.
\]

Now we define the subset $S_{z,T,b}=\{\|z\|^8_{L^8(0,T;L^4)}\le b,
\|x\|^2_H\le b\}\subset \Omega$.
So for any $\eps,N>0$ there exists $b=b^{(\eps)}_N$ such that
for any $m$
\begin{equation}\label{piccoloS}
\mathbb P\{S_{z_m,N,b}\}>1-\frac \eps 2 \frac 1{2^N} .
\end{equation}
Notice that, on the set
$S_{z_m,N,b}$, the process
$\Phi(z_m,N)$ given in  \eqref{laphi} is bounded by
the constant $C_6 b$  and
the process $\Psi(z_m,N)$ given in \eqref{lapsi} is bounded by
the constant  $b+C_5 ( \frac N4+ b+N \|f\|^2_{H^{-1}})$, by using
$\|z\|^4_{L^4(0,N;L^4)}\le \frac N4+\|z\|^8_{L^8(0,N;L^4)}$.
Hence, on the set $S_{z_m,N,b}$ the process
$1+ \Psi(z_m,N)^2 (1+\Phi(z_m,N)^2) e ^{2\Phi(z_m,N)}$
appearing in the estimate \eqref{stime-vm-per-tight} for
$\n{ v_m}_N$ is bounded, $\mathbb P$-a.s.,  by
the constant
\[
C(b,N)
:=
1+[b+C_5 ( \tfrac N4+ b+N \|f\|^2_{H^{-1}})]^2 (1+(C_6b)^2) e ^{2C_6b}.
\]

This allows to find each element  $\alpha^{(\eps)}_N$ of the sequence
$\alpha^{(\eps)}$ satisfying
 \eqref{succ} as follows.
For short, we denote by $\eta(z_m,T)$ the RHS of \eqref{stime-vm-per-tight}.\\
By  \eqref{piccoloS},  we have
\begin{eqnarray*}
\mathbb P\left( \n{ v_m}_N>\alpha_N^{(\eps)}\right)
&\le
\mathbb P\left( \eta(z_m,N)>(\alpha_N^{(\eps)})^2\right)
\le \mathbb P\left( \Omega\setminus S_{z_m,N,b^{(\eps)}_N}\right)
\\&\qquad+\mathbb
P\left( S_{z_m,N,b^{(\eps)}_N}\cap
\{\eta(z_m,N)>(\alpha_N^{(\eps)})^2\}\right)
\\
&\le
\dfrac \eps 2 \dfrac 1{2^N}
+ \mathbb P\left(1_{S_{z_m,N,b^{(\eps)}_N}}
\eta(z_m,N)>(\alpha_N^{(\eps)})^2\right).
\end{eqnarray*}
So, we are left to find $\alpha^{(\eps)}_N$
such that the latter probability is bounded by   $\frac \eps 2 \frac 1{2^N}$.

By Chebyshev inequality we get
\[\begin{split}
 \mathbb P&\left( 1_{S_{z_m,N,b^{(\eps)}_N}} \eta(z_m,N)>(\alpha_N^{(\eps)})^2\right)\\
&\le
\tfrac 1{(\alpha_N^{(\eps)})^2}\mathbb E\left[1_{S_{z_m,N,b^{(\eps)}_N}}
  \eta(z_m,N)\right]\\
&\le \tfrac 1{(\alpha_N^{(\eps)})^2}
 \Big( (1+N^2)C(b^{(\eps)}_N,N)+1+\mathbb
 E\|z_m\|^4_{L^{\frac 83}(0,N;L^4)}
\\&\qquad\qquad
          +(1+N)\mathbb E\|z_m\|^2_{C^\beta([0,N];H^\delta)} +N^{\frac 32} \|f\|^2_{H^{-1}}
 \Big)
 \\
&\le \tfrac 1{(\alpha_N^{(\eps)})^2}
\Big(
  (1+N^2) C(b^{(\eps)}_N,N)
        +1+C_8^{\frac 32} (1+N)^{\frac 32}
\\&\qquad\qquad+(1+N)
         C_9^2 (1+N^{\frac{1-g}2-\beta-\frac \delta 2})^2 +N^{\frac 32} \|f\|^2_{H^{-1}}\Big)
\end{split}
\]
where we used  \eqref{z-in-Lp}-\eqref{z-in-C} to
 estimate the mean values involving the processes $z_m$. And finally we ask the
 latter quantity to be smaller than $ \frac \eps 2 \frac
1{2^N}$ to determine $\alpha_N^{(\eps)}$ so that \eqref{succ} holds true.
This proves the tightness.
\end{proof}

Now, we  prove
\begin{theorem}\label{th:mis-inv-3d}
Let $d=3$.
If  $f \in H^{-1}$ and {\bf (G1)}-{\bf (G2)}-{\bf (G3)}-{\bf (G4)}
are satisfied,
then there exists at least one stationary martingale solution for \eqref{sns}.
\end{theorem}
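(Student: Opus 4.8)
The plan is to obtain the stationary solution as a limit of the stationary martingale solutions $v_m$ of the smoothed equation \eqref{snsm} constructed above. The crucial ingredient is the tightness established in Proposition \ref{pro-tight}: it guarantees that, along a subsequence (still denoted by $m$), the laws of the $v_m$ on $Z$ converge weakly to some probability measure. Since $Z$ carries a nonmetrizable topology, I would invoke Jakubowski's generalization of the Skorohod representation theorem \cite{Ja1997}, exactly as in Section \ref{sect-SwF}; this is legitimate because $Z$ admits a countable family of $\mathcal T$-continuous functions separating its points. This produces a new stochastic basis $(\tilde\Omega,\tilde{\mathbb F},\{\tilde{\mathbb F}_t\}_{t\ge 0},\tilde{\mathbb P})$, a limit process $\tilde v$, and copies $\tilde v_m$ having the same laws as the $v_m$, with $\tilde v_m \to \tilde v$ $\tilde{\mathbb P}$-a.s. in the topology $\mathcal T$ of $Z$.

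Next I would reconstruct on the new basis a $Y$-cylindrical Wiener process $\tilde w$ and verify that $(\tilde v,\tilde w)$ satisfies the weak formulation \eqref{sol-path} of the original equation \eqref{sns}, now with $B$ in place of $B_m$. This is done by passing to the limit $m\to\infty$ in each term of the analogue of \eqref{sol-path} written with $B_m$, following the martingale-solution existence arguments of \cite{noi}. The linear terms pass to the limit using the convergences in $C(\mathbb R_+;H_{\mathrm w})$, $L^2_{\mathrm{loc}}(\mathbb R_+;H_{\mathrm{loc}})$ and $C(\mathbb R_+;U^\prime)$. For the stochastic integral one uses the continuity assumption {\bf (G3)} together with the uniform moment bounds coming from Lemma \ref{lemma-z2} and Theorem \ref{teo-snsm} to identify the limit of $\int_0^t G(\tilde v_m)\,d\tilde w_m$ with $\int_0^t G(\tilde v)\,d\tilde w$ in $H^{-g}$; this step, via \cite{On2005}, also supplies the reconstructed Wiener process.

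The heart of the argument, and the step I expect to be the main obstacle, is the convergence of the nonlinear term $B_m(\tilde v_m,\tilde v_m)\to B(\tilde v,\tilde v)$ tested against $\psi\in H^2$. I would split the difference as $B_m(\tilde v_m,\tilde v_m)-B(\tilde v,\tilde v)=[B_m(\tilde v_m,\tilde v_m)-B(\tilde v_m,\tilde v_m)]+[B(\tilde v_m,\tilde v_m)-B(\tilde v,\tilde v)]$. The first bracket is controlled by \eqref{lim-Bm}, whose right-hand side tends to $0$ because $\rho_m\ast \tilde v_m-\tilde v_m\to 0$; here the uniform bound in $L^{\frac 83}_{\mathrm{loc}}(\mathbb R_+;L^4)$ inherited from \eqref{stime-vm-per-tight} is what prevents loss of compactness in the unbounded domain. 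The second bracket is handled by the strong convergence $\tilde v_m\to\tilde v$ in $L^2_{\mathrm{loc}}(\mathbb R_+;H_{\mathrm{loc}})$ combined with the estimate \eqref{bL4}, upgrading local strong convergence to convergence of the quadratic form against the fixed test function.

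Finally, I would verify that the limit $\tilde v$ is stationary. Since each $v_m$ is stationary, its law on $Z$ is invariant under every time shift $\theta_\tau:Z\to Z$, $\tau\ge 0$; these shifts are continuous maps on $Z$, so the shift-invariance passes to the weak limit of the laws, and hence $\tilde v$ is stationary as well. Together with the path regularity $\tilde v\in C(\mathbb R_+;H_{\mathrm w})\cap L^2_{\mathrm{loc}}(\mathbb R_+;L^4)$ inherited from the uniform bounds and the convergence, this shows that $\left((\tilde\Omega,\tilde{\mathbb F},\{\tilde{\mathbb F}_t\}_{t\ge 0},\tilde{\mathbb P}),\tilde w,\tilde v\right)$ is a stationary martingale solution of \eqref{sns}, completing the proof.
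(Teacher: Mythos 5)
Your overall strategy coincides with the paper's: take the stationary solutions $v_m$ of the smoothed equations furnished by Theorem \ref{th:mis-inv-3d-m}, use the tightness of Proposition \ref{pro-tight} and Jakubowski's version of the Skorokhod theorem, pass to the limit in the (martingale formulation of the) equation, and transfer stationarity to the limit via the continuity of time shifts on $Z$ (the paper phrases this through a.s.\ convergence in $C(\mathbb R_+;U^\prime)$, which is the same idea). The reconstruction of $\tilde w$ is, in the paper, carried out by verifying the martingale identities \eqref{mart1}--\eqref{mart2} via Vitali's theorem and then invoking the martingale representation theorem; your description of this step is vaguer but points in the same direction.

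There is, however, one concrete step in your treatment of the nonlinear term that does not work as written. You split off the mollification error as $B_{m}(\tilde v_m,\tilde v_m)-B(\tilde v_m,\tilde v_m)=B(\rho_m\ast \tilde v_m-\tilde v_m,\tilde v_m)$ and claim, via \eqref{lim-Bm}, that this vanishes because $\rho_m\ast \tilde v_m-\tilde v_m\to 0$. But the convergence $\|\rho_m\ast u-u\|_{L^2}\to 0$ proved after \eqref{lim-Bm} is for a \emph{fixed} $u$; for a sequence varying with $m$ and merely bounded in $L^2$ it can fail (take $\hat u_m$ concentrated at frequencies $|\xi|^2\sim m$, so that $e^{-|\xi|^2/2m}-1$ stays bounded away from $0$ there). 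The uniform $L^{8/3}_{\mathrm{loc}}(\mathbb R_+;L^4)$ bound you invoke does not control this high-frequency loss; what would save the argument is the uniform $L^2(0,T;H^\delta)$ bound with $\delta>0$ built into $K(\alpha)$, which gives $\|\rho_m\ast u-u\|_{L^2}\le C m^{-\delta/2}\|u\|_{H^\delta}$ on the Fourier side. The paper avoids the issue altogether by distributing the difference as
\begin{equation*}
\langle B_{m}(\tilde v_m-\tilde v,\phi),\tilde v_m\rangle
+\langle B_{m}(\tilde v,\phi),\tilde v_m-\tilde v\rangle
+\langle B(\rho_{m}\ast \tilde v-\tilde v,\phi),\tilde v\rangle ,
\end{equation*}
so that the mollification error falls on the fixed limit $\tilde v$ and dominated convergence applies directly; the first two terms are then handled, for compactly supported $\phi$, by the $L^2_{\mathrm{loc}}(\mathbb R_+;H_{\mathrm{loc}})$ convergence, followed by a density argument in $\phi$ (a step you should also make explicit, since \eqref{bL4} alone does not let you exploit merely local convergence). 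Either repair is fine, but as stated your justification of the first bracket is the one genuine gap.
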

\begin{proof}
For each $m\in \mathbb N$,  we fix an invariant measure $\mu_m$ for equation
\eqref{snsm} as given by Theorem \ref{th:mis-inv-3d-m} and
denote by $v_m$   the stationary
solution of \eqref{snsm} whose
marginal at any fixed time is $\mu_m$.
According to Proposition \ref{pro-tight}, the
sequence of the laws of the stationary processes $v_m$ is tight in the space
\[
Z=C(\mathbb R_+;H_{\mathrm{w}})\cap L^2_{\mathrm{loc}}(\mathbb R_+;H_{\mathrm{loc}})
\cap
L^{\frac 83}_{\mathrm{loc},\mathrm{w}}(\mathbb R_+;L^4) 
\cap C(\mathbb R_+;U^\prime).
\]

As in Section \ref{sect-SwF}, we use  the Jakubowski's generalization  of the
Skorokhod Theorem to  nonmetric spaces, see \cite{Ja1997}. Hence, there exist  a subsequence
$\{v_{m_j}\}_{j=1}^\infty$,
a stochastic basis $(\tilde \Omega, \tilde{\mathbb F},
\{\tilde{\mathbb F}_t\}_{t\ge 0}, \tilde{\mathbb P})$, $Z$-valued
Borel measurable variables $\tilde v$ and $\{\tilde v_j\}_{j=1}^\infty$ such that
for any $j\in \mathbb N$ the laws of $v_{m_j}$ and $\tilde v_j$ are the same
and $\tilde v_j$ converges to $\tilde v$ $\tilde {\mathbb P}$-a.s.
with the topology $\mathcal T$. Moreover,
this limit process $\tilde v$ is stationary in $H$. Indeed,
by the a.s. convergence in $C(\mathbb R_+;U^\prime)$
the process $\tilde v$ is stationary in $U^\prime$.
But, $\tilde v \in C(\mathbb R_+;H_{\mathrm{w}})$ so
each   $\tilde  v(t)$  is an $H$-valued random variable and therefore $\tilde v$
  is stationary in $H$.

Since each $\tilde v_j$ has the same law as $v_{m_j}$,
it is a martingale solution to the smoothed equation \eqref{snsm}; therefore
 for any $\phi \in H^a$ with $a>\frac 52$ (so that
$\|\nabla \phi\|_{L^\infty}\le C \|\phi\|_{H^a}$), we consider
the process
\begin{multline*}
M^\phi_j(t)=(\tilde v_j(t),\phi)_H -(\tilde v_j(0),\phi)_H
+\int_0^t (\tilde v_j(s),A\phi)_H ds \\
+\gamma \int_0^t   (\tilde v_j(s),\phi)_H ds
+\int_0^t \langle B_{m_j}(\tilde v_j(s),\tilde v_j(s)),\phi\rangle  ds
- \langle f,\phi\rangle t
\end{multline*}
Since  $M^\phi_j(t)$ has the same law as
$\langle \int_{0}^t G(v_{m_j}(s))\,dw(s),\phi\rangle$,
it is a martingale; in particular
\begin{equation}\label{mart1}
\tilde{\mathbb E}\big[(M^\phi_j(t)-M^\phi_j(s))
 h(\tilde v_j|_{[0,s]})\big]=0
\end{equation}
\begin{multline}\label{mart2}
\tilde{\mathbb E}\big[(M^\phi_j(t)M^\psi_j(t)-M^\phi_j(s)M^\psi_j(s))
 h(\tilde v_j|_{[0,s]})\big]
\\=
\tilde {\mathbb E}\big[(\int_s^t
\left(G(\tilde v_{j}(r))^*J^{2g}\phi, G(\tilde v_{j}(r))^*J^{2g}\psi\right)_{Y}dr)
 h(\tilde v_{j}|_{[0,s]})
\big]
\end{multline}
for any $0<s<t$, any $\phi,\psi \in H^a$ and any
bounded and continuous function $h:C([0,s];U^\prime)\to\mathbb R$.

When $j \to \infty$, for any $0<t$  we have
$M^\phi_j(t)\to M^\phi(t)$ $\tilde{\mathbb P}$-a.s.,  where
\begin{multline*}
M^\phi(t)=(\tilde v(t),\phi)_H -(\tilde v(0),\phi)_H
+\int_0^t (\tilde v(s),A\phi)_H ds \\
+\gamma \int_0^t   (\tilde v(s),\phi)_H ds
+\int_0^t \langle B(\tilde v(s),\tilde v(s)),\phi\rangle  ds
- \langle f,\phi\rangle t.
\end{multline*}
Indeed, the convergence of each term
is done as in \cite{noi} except for the trilinear term.
For this, we take $\phi$ regular enough and with compact support
(hence $\nabla \phi$ is bounded), let
us say that the support is contained in a centered ball of radius $R$;  we have

\begin{eqnarray*}
&&\hspace{-1truecm}\lefteqn{|\langle B_{m_j}(\tilde v_j(s),\tilde v_j(s)), \phi \rangle
- \langle B(\tilde v(s),\tilde v(s)), \phi \rangle|}
\\&\le& |\langle B_{m_j}(\tilde v_j(s)-\tilde v(s), \phi ),\tilde v_j(s)\rangle|\\
&+&|\langle B_{m_j}(\tilde v(s),\phi), \tilde v_j(s)- \tilde v(s)\rangle|
+|\langle B(\rho_{m_j}\ast \tilde v(s)-\tilde v(s), \phi),  \tilde v(s)\rangle|
\\&
\le&
C \|\tilde v_j(s)-\tilde v(s)\|_{H_R} \|\tilde v_j(s)\|_{H_R}
\|\nabla\phi\|_{L^\infty}\\&  +&
C \|\tilde v(s)\|_{H_R} \|\tilde v_j(s)-\tilde v(s)\|_{H_R}
\|\nabla\phi\|_{L^\infty}
\\&
 + & C
\|\rho_{m_j}\ast \tilde v(s)-\tilde v(s)\|_{H_R}\|\tilde v(s)\|_{H_R}
\|\nabla\phi\|_{L^\infty}.
\end{eqnarray*}
Using the convergence in the space $L^2(0,T;H_{\mathrm{loc}})$,
we pass into the limit as $m_j\to \infty$. 
Since each $\phi \in H^a$ can be approximated by a smooth and compactly supported
vector field, we
obtain the same limit for any $\phi \in H^a$, see Lemma B.1 in
\cite{BM2013}.

Therefore, to get the convergence of the LHS of \eqref{mart1} and
\eqref{mart2} we appeal to Vitali theorem, using the pathwise
convergence
and the uniform estimate
\[\begin{split}
\tilde {\mathbb E}\Big[|M^\phi_j(t)|^4\Big]
&=
{\mathbb E}\Big[|\langle \int_0^t G( v_{m_j}(r))dw(r),\phi\rangle|^4\Big]
\\
&\le
{\mathbb E} \Big\|\int_0^t G(v_{m_j}(r))dw(r)\Big\|_{H^{-g}}^4 \|\phi\|_{H^g}^4
\\
& \le
C  {\mathbb E} \Big(\int_0^t \|G( v_{m_j}(r))\|_{\gamma(Y;H^{-g})}^2dr\Big)^2
\|\phi\|_{H^g}^4
\\
& \le
C (K_{g,2} ^2t)^2 \|\phi\|_{H^g}^4
\end{split}
\]
thanks to {\bf (G1)}.

As far as the convergence of the RHS of \eqref{mart2} is concerned,
we recall from \cite{noi} the pathwise convergence of the cross
variance process
\[
\int_0^t \big( G(\tilde v_{j}(r))^*f_1, G(\tilde v_{j}(r))^*f_2\big)_Y dr
\to \int_0^t \big( G(\tilde v(r))^*f_1, G(\tilde v(r))^* f_2\big)_Y dr
\]
for any $f_1,f_2 \in H^{-g}$,
thanks to assumption {\bf (G3)}.
Again by means of Vitali theorem we get the convergence for the mean
value in the RHS of \eqref{mart2} since
$\|G(\tilde v_{j}(r))^*\|_{\gamma(H^{-g};Y)}=\|G(\tilde
v_{j}(r))\|_{\gamma(Y;H^{-g})}\le K_{g,2}$.

Therefore relationships \eqref{mart1} and \eqref{mart2} hold also for
the limit process $M^\phi$; they show that this is a martingale.
Therefore, with usual martingale representation theorem, see e.g. \cite{dpz},
we  conclude that
there exists a $Y$-cylindrical Wiener process $\tilde w$ such that
\[
\langle  M^\phi(t),\phi\rangle=\langle  \int_0^t G(\tilde v(s))\,d\tilde w(s),\phi\rangle.
\]
This concludes the proof that the limit
process $\tilde v$ is a stationary martingale solution of equation \eqref{sns}.
\end{proof}

\end{document}